\documentclass[10pt,a4paper]{article}
\usepackage[utf8]{inputenc}
\usepackage[T1]{fontenc}
\usepackage[english]{babel}
\usepackage[bottom=3cm, top=3cm, left=2cm, right=2cm]{geometry}
\usepackage{amsmath}
\usepackage{amsfonts}
\usepackage{amssymb}
\usepackage{amsthm}
\usepackage{graphicx}
\usepackage{color}
\usepackage{mathrsfs}
\usepackage{lmodern}
\usepackage[most]{tcolorbox}
\usepackage{framed}
\usepackage{fancybox}
\usepackage{tikz}
\usepackage{bbm}
\usepackage{dsfont}
\usepackage{wrapfig}
\usepackage{hyperref}
\usepackage[pagewise]{lineno}
\allowdisplaybreaks

\newtcolorbox{boxrd}[2][]{enhanced,colback=white,width={\textwidth},
attach boxed title to top left={yshift={-0.5\baselineskip},xshift=1cm}, 
title={#2},
boxrule=0.5pt,
coltitle=black,
boxed title style={
  borderline={-0.5mm}{black}
  colframe=white,
  colback=gray!50,
  colupper={black},
},
}

\newtcolorbox{boxsp}[2][]{%
  enhanced,colback=white,colframe=black,coltitle=black,
  boxrule=0.4pt,
  fonttitle=\itshape,
  attach boxed title to top left={yshift=-0.5\baselineskip-0.3pt,xshift=2mm},
  boxed title style={tile,size=minimal,left=0.5mm,right=0.5mm,
    colback=white,before upper=\strut},
  title=#2,#1
}

\newcommand{\N}{\mathbb{N}}

\newcommand{\R}{\mathbb{R}}
\newcommand{\C}{\mathbb{C}}

\newcommand{\lessim}{\lesssim}

\newtheorem{theorem}{Theorem}
\newtheorem{proposition}{Proposition}
\newtheorem{lemma}{Lemma}

\theoremstyle{definition}
\newtheorem{remark}{Remark}

\usepackage{fancyhdr}
\usepackage{changepage}

\title{\textbf{Existence of traveling waves for the two-dimensional Zakharov system}}
\author{Guillaume Rialland}
\date{\footnotesize{Université de Paris-Saclay, UVSQ, CNRS, Laboratoire de Mathématiques de Versailles, 78000 Versailles }}

\begin{document}
\maketitle 

\begin{adjustwidth}{80pt}{80pt}
\small{\textsc{Abstract.} Via a fixed point argument, we construct solitary waves for the two-dimensional Zakharov system that travel with any small speed $c \in \R^2$. Moreover, we investigate their asymptotic behavior.}
\end{adjustwidth}

\textcolor{white}{a} \\ \\ \textcolor{white}{a}

In this paper we study the two-dimensional Zakharov system
\begin{equation}
\left \{ \begin{array}{l} \partial_t u = i \Delta u - inu \\
\partial_t n = - \nabla \cdot v \\
\partial_t v = - \nabla n - \nabla (|u|^2 ). \end{array} \right.
\label{ZL}
\end{equation}
for $(t \, , x) \in \R \times \R^2$. The associated Cauchy problem is well-posed for $(u \, , n \, , v)$ in $H^1 ( \R^2 \, , \C ) \times L^2 ( \R^2 \, , \R ) \times H^{-1} ( \R^2 \, , \R^2 )$. We refer to \cite{Bou,Gin,Sa,SuSu} for the analysis of the well-posedness of the Zakharov system in various spaces. 
The system \eqref{ZL} above is actually a convenient reformulation of the following system, first introduced by V. E. Zakharov in \cite{Za} to describe the propagation of Langmuir turbulence in plasma:
\begin{equation}
\left \{ \begin{array}{l} \partial_t u = i \Delta u - inu \\
\partial_t^2 n = \Delta ( n + |u|^2 ). \end{array} \right.
\label{Z}
\end{equation}
See also \cite{Gib} for a direct Lagrangian derivation of \eqref{ZL} and \cite{Ki} for comments on the physical interest of these systems and their solitary waves.
The equivalence between systems \eqref{ZL} and \eqref{Z} requires some caution. If $(u \, , n \, , v) \in H^1 \times L^2 \times L^2$ is a solution of \eqref{ZL} then $(u \, , n )$ is a solution of \eqref{Z} and $\partial_t n = - \nabla \cdot v \in H^{-1} ( \R^d \, , \R )$ for all time. Conversely, take $(u \, , n) \in H^1 \times L^2$ a solution of \eqref{Z}. Assume that, for any time $t$, there exists $v(t) \in L^2 ( \R^2)$ such that $\partial_t n (t) = - \nabla \cdot v(t)$. Then $(u \, , n \, , v)$ is a solution (in $H^1 \times L^2 \times L^2$) of \eqref{ZL}. It is known that, if $(u_0 \, , n_0 \, , n_1 ) \in H^1 \times L^2 \times H^{-1}$ is such that $n_1 = - \nabla \cdot v_0$, then the solution $(u(t) \, , n (t) \, , \partial_t n(t))$ of \eqref{Z} with initial data $(u_0 \, , n_0 \, , n_1)$ satisfies:
\[ \forall t , \, \, \, \exists v(t) \in L^2 ( \R^2 \, , \R^2 ), \, \, \partial_t n(t) = - \nabla \cdot v(t). \]
This useful property can be found in the introduction of \cite{Me}. In several papers (see \cite{Bou,GM,Sa}), the space of functions $w$ such that there exists $\tilde{w} \in L^2 ( \R^2 \, , \R^2 )$ such that $w = \nabla \cdot \tilde{w}$ is denoted $\hat{H}^{-1} ( \R^2 \, , \R )$. With this notation, the Cauchy problem associated with \eqref{Z} is well-posed for $(u \, , n \, , \partial_t n ) \in H^1 ( \R^d \, , \C ) \times L^2 ( \R^d \, , \R ) \times \hat{H}^{-1} ( \R^d \, , \R )$.
A solution to system \eqref{ZL} preserves the following quantities through time:
\begin{itemize}
	\item the mass $M(u ) = \displaystyle{ \int_{\R^d} |u|^2}$;
	\item the energy $H(u \, , n \, , v) = \displaystyle{\int_{\R^d} \left ( | \nabla u |^2 + n |u|^2 + \frac{n^2}{2} + \frac{|v|^2}{2} \right )}$;
	\item the momentum $P(u \, , n \, , v) = \displaystyle{\text{Im} \left (  \int_{\R^d} \overline{u} \nabla u \right ) + \int_{\R^d} nv}$.
\end{itemize}
Note that the energy and the momentum require the use of the function $v$, thus require relying on the formulation \eqref{ZL}. 
In the present paper we shall construct solitary wave solutions to the system \eqref{ZL} and investigate their regularity and asymptotic behavior. Actually, there exists a well-known standing wave solution of \eqref{ZL} for any pulsation $\omega >0$, namely 
\[ u_\omega (x) = \sqrt{\omega} \, Q( \sqrt{\omega} x), \quad n_\omega (x) = - \omega Q^2 ( \sqrt{\omega} \, x ) \quad \text{and} \quad v_\omega (x)=0, \]
where $Q \in H^1 ( \R^2 )$ is the unique positive radial ground state of $\Delta Q = Q - Q^3$, or more generally a solution of this equation. However, this standing wave does not generate travelling waves; note that, contrary to the NLS equation or wave equation, no Galilean or Lorentz transform exists for the Zakharov system. 

Recall that, in one dimension, if we denote by $Q_1$ the one-dimensional cubic NLS soliton ($Q_1''=Q_1-Q_1^3$), then
\begin{equation} 
(u \, , n \, , v)(x \, , t) = \left ( e^{i \left ( \frac{cx}{2} - \frac{c^2t}{4} + \omega t \right )} \sqrt{1-c^2} \sqrt{\omega} \, Q_1(\sqrt{\omega} (x-ct)) \, , - \omega Q_1^2 ( \sqrt{\omega} (x-ct)) \, , -c \omega Q_1^2 ( \sqrt{\omega} (x-ct)) \right ) 
\label{sol1D}
\end{equation}
is a solution to the one-dimensional Zakharov system, for any $\omega >0$ and $c \in (-1 \, , 1)$. Hence the 1D Zakharov system admits solitary waves of any velocity smaller than $1$, with comfortable explicit expressions. See \cite{Oh,Wu} for the study of these 1D solitons and their orbital stability. However, such an easy transposition from Schrödinger solitons to Zakharov solitons does not happen in two dimensions: adapting the formula \eqref{sol1D} to the two-dimensional NLS soliton does not lead to a solution to system \eqref{ZL}. 

In the present paper we construct travelling waves solutions to \eqref{ZL}, for any celerity $c$ small enough. More explicitly, we shall construct solutions $(u \, , n \, , v)$ to \eqref{ZL} of the form
\begin{equation}
\begin{split}
& u(t \, , x) = \sqrt{\omega} \, U_c \left ( \sqrt{\omega} (x-ct) \right ) e^{i \left ( \frac{c \cdot x}{2} - \frac{|c|^2t}{4} + \omega t \right )}, \\
& n(t \, , x) = \omega N_c \left ( \sqrt{\omega} (x- ct) \right ) \\
\text{and} \quad & v(t \, , x) = \omega V_c \left ( \sqrt{\omega} (x- ct) \right ).
\end{split}
\label{unv}
\end{equation}
Our main result is the following.

\begin{theorem}\label{thm1}
There exists $c_* \in (0 \, , 1)$ such that, for any $c \in \R^2$ with $|c| < c_*$, there exist $(U_c \, , N_c \, , V_c) \in H^1 ( \R^2 \, , \C ) \times L^2 ( \R^2 \, , \R ) \times L^2 ( \R^2 \, , \R^2 )$ such that, for any $\omega >0$, the functions $(u \, , n \, , v)$ defined by \eqref{unv} are solutions to the system \eqref{ZL}. The functions $U_c$, $N_c$ and $V_c$ belong to $H^s$ for any $s \geqslant 0$, and satisfy:
\[ \left \| U_c - Q \right \|_{H^2} \lesssim |c|^2, \quad \left \| N_c + Q^2 \right \|_{H^2} \lesssim |c|^2 \quad \text{and} \quad \left \| V_c \right \|_{H^2 \times H^2} \lesssim |c| . \]
Moreover, for any $m \in \N^2$, and $|y| \geqslant 1$,
\begin{equation}
|\partial^m U_c(y)| \lesssim_m e^{- \frac{1}{2} |y|} , \quad | \partial^m N_c (y) | \lesssim_m e^{-|y|} + \frac{|c|^2}{|y|^{|m|+2}} \quad \text{and} \quad | \partial^m V_{c} (y) | \lesssim \frac{|c|}{|y|^{|m|+2}} .
\label{decays}
\end{equation}
\end{theorem}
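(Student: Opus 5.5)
The plan is to reduce \eqref{ZL} to a single nonlocal elliptic equation for the profile $U_c$ and solve it near $Q$ by a contraction argument in a space of even functions. By scaling it suffices to take $\omega=1$. Inserting \eqref{unv} into \eqref{ZL}, the Galilean-type phase $e^{i(c\cdot x/2-|c|^2t/4+t)}$ cancels the transport terms $-c\cdot\nabla U$ in the Schrödinger equation. One finds
\[ -\Delta U + U + NU = 0 ,\qquad c\cdot\nabla N = \nabla\cdot V,\qquad (c\cdot\nabla) V = \nabla\left(N+|U|^2\right). \]
Taking the divergence of the last identity and using the second gives $\left(\Delta-(c\cdot\nabla)^2\right)N=-\Delta |U|^2$. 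For $|c|<1$ we therefore set
\[ \widehat{N} = -\frac{|\xi|^2}{|\xi|^2-(c\cdot\xi)^2}\,\widehat{|U|^2},\qquad \widehat{V}= -\frac{\xi\,(c\cdot\xi)}{|\xi|^2-(c\cdot\xi)^2}\,\widehat{|U|^2}. \]
Both are Fourier multipliers that are homogeneous of degree $0$ and smooth on the sphere. One then checks directly that $(N,V)$ so defined satisfy the two wave equations. Writing $N=-T_c|U|^2$, the multiplier of $T_c-\mathrm{Id}$ is $(c\cdot\xi)^2/(|\xi|^2-(c\cdot\xi)^2)=O(|c|^2)$ and that of $V$ is $O(|c|)$, uniformly for $|c|\le c_*<1$. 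This already explains the orders $|c|^2$ and $|c|$ in the statement, once $U_c$ is close to $Q$ in $H^2$.

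The profile equation becomes $-\Delta U+U-(T_c|U|^2)U=0$, which for $c=0$ is solved by $Q$. Since the multiplier of $T_c$ is real and even, we look for $U$ real-valued and even ($U(-y)=U(y)$). Writing $U=Q+\varepsilon$, we get
\[ L_+\varepsilon = (T_c-\mathrm{Id})\big((Q+\varepsilon)^2\big)(Q+\varepsilon) + \big(\text{terms quadratic and cubic in } \varepsilon\big),\qquad L_+=-\Delta+1-3Q^2 . \]
The kernel of $L_+$ is spanned by $\partial_1Q,\partial_2Q$ (nondegeneracy of the ground state), which are odd. Hence $L_+$ is invertible from $H^2_{\mathrm{even}}$ onto $L^2_{\mathrm{even}}$, and the right-hand side preserves parity. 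Using that $H^2(\R^2)$ is an algebra and $\|T_c-\mathrm{Id}\|_{H^s\to H^s}\lesssim|c|^2$, the map $\varepsilon\mapsto L_+^{-1}(\cdots)$ is a contraction on the ball of radius $C|c|^2$ in $H^2_{\mathrm{even}}$ for $|c|<c_*$. This gives $\|U_c-Q\|_{H^2}\lesssim|c|^2$, and then the bounds on $N_c+Q^2$ and $V_c$ follow from the multiplier bounds. Higher regularity $H^s$ follows by bootstrapping in $U=(1-\Delta)^{-1}(-NU)$, since $T_c$ is bounded on every $H^s$.

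For the decay \eqref{decays}, we first treat $U_c$. Since $N_c\in H^2\subset L^\infty$ is small at infinity, a comparison or Agmon argument applied to $(-\Delta+1)U=-NU$ gives $|U|\lesssim e^{-(1-\delta)|y|}$. Iterating with the Bessel kernel of $(1-\Delta)^{-1}$ then yields the derivative bounds, with $e^{-|y|/2}$ as a comfortable choice. For $N_c$ and $V_c$, we write $N_c=-|U_c|^2-K_c*|U_c|^2$, where $K_c$ is the kernel of $T_c-\mathrm{Id}$; similarly, $V_c$ is given by a degree-$0$ multiplier whose kernel has no delta part. These kernels are homogeneous of degree $-2$ principal-value distributions, smooth away from $0$, with $|\partial^mK_c(y)|\lesssim |c|^2|y|^{-2-|m|}$, respectively $|c||y|^{-2-|m|}$, uniformly in $c$. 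Convolving them with the exponentially decaying $|U_c|^2$ gives the polynomial tails for $|y|\ge1$: split the integral into $|z|\le|y|/2$ and the rest, and use the zero-mean property of the kernel on spheres near the origin.

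The main obstacle I expect is exactly this last step. It requires explicit, $c$-uniform pointwise bounds on the kernels of the anisotropic multipliers $(c\cdot\xi)^2/(|\xi|^2-(c\cdot\xi)^2)$ and $\xi(c\cdot\xi)/(|\xi|^2-(c\cdot\xi)^2)$ and all their derivatives. One also has to handle the convolution so that the $e^{-|y|}$ piece (from $|U|^2$) and the $|c|^2|y|^{-|m|-2}$ piece separate cleanly. I would first try the standard theory of Calderón–Zygmund kernels for smooth degree-$0$ symbols, tracking the $c$-dependence through the factor $(c\cdot\xi)^2$ or $(c\cdot\xi)$ in the numerator.
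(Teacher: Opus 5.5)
Your proposal is correct. It reaches the result by a somewhat different route than the paper, mainly in the fixed-point setup.

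\textbf{Fixed-point setup.} The paper rotates to $c=ce_1$, writes $U=Q+\eta_1+i\eta_2$, and runs the contraction in $H^2_r\times H^2_{oe}$. This forces it to invert $L_-$ on $y_1$-odd, $y_2$-even functions as well, which is the content of the appendix (Lax--Milgram plus an auxiliary $R$ with $L_-R=\partial_{y_1}Q$). You look directly for real, even $U$ and only need $L_+^{-1}$ on $H^2_{\mathrm{even}}$, for any direction of $c$. This loses nothing: $F_c^-(\eta_1,0)=0$, so by uniqueness the paper's fixed point has $\eta_2=0$ anyway. Your parity class is also the right one. The symbol of $S_c$ is not rotation invariant, so $F_c^+(0,0)=S_c(Q^2)Q$ is not radial. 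Hence the radial class the paper uses for $\eta_1$ is not actually preserved by $F_c^+$, whereas evenness is.

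\textbf{Decay of $N_c$ and $V_c$.} Both arguments rest on the same mechanism: a kernel homogeneous of degree $-2$ convolved with the exponentially decaying $|U_c|^2$. The paper makes this explicit through the fundamental solution $\ln(y_1^2/\nu^2+y_2^2)$ of $\Delta_c$, with $\nu=\sqrt{1-|c|^2}$. Near the singularity of the logarithm, where $|U_c|^2$ is exponentially small, it keeps $\partial_{y_1}^2$ on $|U_c|^2$. It integrates by parts only on a small ball $|\zeta|<|z|^{1/K}$ far from that singularity. This costs more bookkeeping but also yields the expansion \eqref{expanN}. Your Calder\'on--Zygmund route is quicker if one only wants \eqref{decays}.

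\textbf{Two small corrections.} First, the symbols of $S_c$ and of the component of $V_c$ along $c$ have nonzero spherical mean, so their kernels do contain a $\delta$ part. This only adds $O(|c|^2)$, respectively $O(|c|)$, times $|U_c|^2$, which is exponentially small, so your bounds are unaffected. Second, the constants in the Agmon step should be uniform in $c$. To ensure this, choose the radius beyond which $|N_c|$ is small using $\|N_c+Q^2\|_{L^\infty}\lesssim|c|^2$, not merely the fact that $N_c\in H^2$ vanishes at infinity.
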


\begin{remark} \label{rk1}
We shall see that, contrary to Schrödinger and 1D Zakharov solitons, the soliton $U_c$ is not radial. More precisely, $\text{Re} (U_c)$ is radial while $\text{Im} (U_c)$ is odd in the direction of $c$ and even in the direction orthogonal to $c$. Besides, while $U_c$ has exponential decay, \eqref{decays} shows that it is not the case for $N_c$ and $V_c$. This weaker asymptotic also differs from the Schrödinger and 1D Zakharov solitons.
\end{remark}

\textbf{Organization of the paper.} We construct these solitary waves by a fixed point argument in Section \S\ref{sec1} below. We reduce our system to an equation only on the function $U$, which we then linearize around the Schrödinger soliton $Q$ in order to apply Banach's standard fixed point theorem. We then construct the functions $N$ and $V$ and check their regularity. In Section \S\ref{sec2}, we establish the asymptotic estimates \eqref{decays}: see Lemma \ref{lemAgmon} and Lemma \ref{lemDecrNV}. In Lemma \ref{lemDecrNV}, we go a little further and investigate a power expansion for $N_c$ at any order. 

\begin{remark} \label{rk2}
The construction of solitary waves via a fixed point argument is a standard method; for instance see \cite{ChiPa} for a similar construction of solitary waves for the 2D Gross-Pitaevskii equation. We also refer to \cite{GM} for the use of a fixed point argument to construct blow-up solutions to the 2D Zakharov system. Moreover, in \cite{GM} the topological issue of the unboundedness of the branch of solutions $(P_\lambda \, , N_\lambda )$ constructed is investigated, although it is not established whether the branch holds for any $\lambda >0$ or only up to a certain finite $\lambda^*$. In the present paper, we do not investigate the matter of extending the branch of solitons $(U_c \, , N_c \, , V_c)$ to non-small celerities $c$. Finally we also refer to \cite{HS} for a construction of solitary waves for fractional NLS equations that does not rely on fixed point arguments but instead on a variational method. 
\end{remark}

\begin{remark} \label{rk3}
With Theorem \ref{thm1} we hope to pave the way for the construction of multi-solitons for the 2D Zakharov system. Multi-solitons for the 1D Zakharov system have been constructed in \cite{GR3}, using the explicit expression \eqref{sol1D} for the 1D Zakharov solitary waves. In most usual constructions of multi-solitons for dispersive equations, the exponential decay of the solitons is of great use in the proof, enabling to handle the interactions between two solitons located away from one another. See \cite{Cot3,Cot4,Cot2,Gus,LeC1,LeC2,Ma5,MaMe3,GR3} for the contruction of such multi-solitons for NLS-like, gKdV-like and Zakharov equations. Here, for the 2D Zakharov system, while the component $U_c$ of the soliton decreases exponentially, the components $N_c$ and $V_c$ have far weaker decays. In this situation, it is paramount to understand well enough this non-exponential decay (which is the point of Lemma \ref{lemDecrNV}) in order to succeed in constructing multi-solitons. The construction of 2D Zakharov multi-solitons is not carried through in the present paper, but we refer to \cite{KMR} for a similar construction of multi-solitons in a case where the decay of the solitons is not exponential. 
\end{remark}

\textbf{Notation.} We denote by $Q$ the standard cubic NLS soliton: $Q$ is the unique positive radial solution in $H^1 ( \R^2 )$ of the equation $\Delta Q = Q-Q^3$. We introduce the standard linearization operators around NLS solitons:
\[ L_+ = - \Delta +1-3Q^2 \quad \text{and} \quad L_- = - \Delta + 1 - Q^2. \] 
We refer to \cite{CGNT,We2} for standard properties of $Q$ and the operators $L_\pm$. Recall for example that
\[ Q(y) \lesssim \frac{e^{-|y|}}{|y|^{1/2}} , \quad \text{Ker} (L_+) = \text{span} ( \partial_{y_1} Q \, , \partial_{y_2} Q) \quad \text{and} \quad \text{Ker} (L_-) = \text{span} (Q). \]
For $m = (m_1 \, , m_2) \in \N^2$, we denote by $\partial^m$ the differential operator $\partial_{y_1}^{m_1} \partial_{y_2}^{m_2}$. We also write $|m| = m_1+m_2$. The canonical basis of $\R^2$ will be denoted $(e_1 \, , e_2)$. The notation $\hat{f}$ denotes the Fourier transform of $f$ and $\mathcal{F}^{-1}$ denotes the inverse Fourier transform. The letter $C$ indicates a constant whose value can change from one line to another; if $C$ depends on any parameter $\mathfrak{p}$ we shall write $C_{\mathfrak{p}}$. Finally, the notation $\mathbf{A} \lesssim \mathbf{B}$ means that there exists a constant $C>0$ such that $\mathbf{A} \leqslant C \mathbf{B}$. The implicit constant $C$ does not depend on any parameter or function, unless it is explicitly indicated. The notation $\mathbf{A} \lesssim_{\mathfrak {p}} \mathbf{B}$ indicates that the implicit constant $C$ may depend on the parameter $\mathfrak{p}$.

\textbf{Acknowledgments.} This paper is the result of many discussions with Yvan Martel. May he be warmly thanked for it here. 

\section{Construction of the solitary waves} \label{sec1}

Straightforward computations show that $(u \, , n \, , v)$ satisfies \eqref{ZL} if and only if $(U \, , N \, , V)$ satisfies the following stationary system:
\begin{equation}
\left \{ \begin{array}{l} \Delta U = U + NU \\ \nabla \left ( N + |U|^2 \right ) = c \cdot \nabla V \\ c \cdot \nabla N = \nabla \cdot V \end{array} \right.
\label{sysNUV}
\end{equation}
where $c \cdot \nabla V = c_1 \partial_{y_1} V + c_2 \partial_{y_2} V$ (recall that $V$ has values in $\R^2$). We shall look for $V$ under the form $V = \nabla W$. It follows that $\nabla \cdot V = \Delta W$ and $c \cdot \nabla V = \nabla ((c \cdot \nabla ) W )$. Thus
\[ \nabla \left ( N + |U|^2 \right ) = c \cdot \nabla V \quad \Longleftrightarrow \quad \nabla \left ( N + |U|^2 \right ) = \nabla \left ( (c \cdot \nabla ) W \right ) \quad \Longleftrightarrow \quad N+|U|^2 = (c \cdot \nabla )W . \]
Inserting $N=-|U|^2 - ( c \cdot \nabla ) W$ into the third line of \eqref{sysNUV}, straightforward calculation show that
\[ c \cdot \nabla N = \nabla \cdot V \quad \Longleftrightarrow \quad \Delta_c W = - (c \cdot \nabla ) ( |U|^2 ) \]
where $\Delta_c := \Delta - (c \cdot \nabla )^2$. Hence
\[ \left \{ \begin{array}{l} V = \nabla W \\ \eqref{sysNUV} \end{array} \right. \quad \Longleftrightarrow \quad \left \{ \begin{array}{l} \Delta U = U+NU \\ V = \nabla W = - \nabla \Delta_c^{-1} (c \cdot \nabla ) (|U|^2) \\ N = -|U|^2 - \Delta_c^{-1} (c \cdot \nabla )^2 (|U|^2) \\ W = - \Delta_c^{-1} (c \cdot \nabla ) (|U|^2 ). \end{array} \right. \]
Let us define the useful operators $S_c = \Delta_c^{-1} (c \cdot \nabla )^2$, $T_{c,1} = \Delta_c^{-1} (c \cdot \nabla ) \partial_{y_1}$ and $T_{c,2} = \Delta_c^{-1} (c \cdot \nabla ) \partial_{y_2}$. Explicitly,
\begin{equation} 
S_c f = \mathcal{F}^{-1} \left [ \frac{(c \cdot \xi )^2}{| \xi |^2 - ( c \cdot \xi )^2} \, \hat{f} ( \xi ) \right ] \quad \text{and} \quad T_{c,j} f = \mathcal{F}^{-1} \left [ \frac{(c \cdot \xi ) \xi_j}{| \xi |^2 - (c \cdot \xi )^2} \, \hat{f} ( \xi ) \right ] .
\label{Sc}
\end{equation}
To sum up, if $(U \, , N \, , V)$ satisfies the system
\begin{equation} 
\left \{ \begin{array}{l} \Delta U = U+NU \\ N = -|U|^2 - S_c(|U|^2) \\ V = - \nabla \Delta_c^{-1} (c \cdot \nabla )(|U|^2) = - \left ( \begin{array}{c} T_{c,1} (|U^2|) \\ T_{c,2} (|U^2|) \end{array} \right ) \end{array} \right. 
\label{sysNUV2}
\end{equation}
then $(U \, , N \, , V)$ satisfies the system \eqref{sysNUV}. We focus on finding solutions of system \eqref{sysNUV2}. The operators $S_c$ and $T_{c,j}$ satisfy the following properties.

\begin{lemma} \label{lemSc}
Take $s \geqslant 0$, $c \in (-1 \, , 1)$. The operators $S_c$ and $T_{c,j}$ ($j=1,2$) given by \eqref{Sc} are well-defined bounded linear operators $H^s \to H^s$ and we have
\begin{equation} 
|||S_c|||_{H^s \to H^s} \leqslant \frac{|c|^2}{1-|c|^2} \quad \text{and} \quad |||T_{c,j}|||_{H^s \to H^s} \leqslant \frac{|c|}{1-|c|^2} .
\label{Sc1}
\end{equation}
Moreover, the application $c \in (-1 \, , 1) \mapsto S_c \in \mathcal{L} (H^s)$ is differentiable and, for all $f, \tilde{f} \in H^s$, $c,\tilde{c} \in (-1 \, , 1)$ and $j \in \{ 1 \, , 2 \}$,
\begin{equation} 
\left \| S_c (f) - S_{\tilde{c}} ( \tilde{f} ) \right \|_{H^s} \leqslant \frac{2|c - \tilde{c}|}{(1-|c|^2)(1-|\tilde{c}|^2)} \left \| f \right \|_{H^s} + \frac{| \tilde{c} |^2}{1- | \tilde{c} |^2} \left \| f - \tilde{f} \right \|_{H^s} 
\label{Sc2}
\end{equation}
and
\begin{equation}
\left \| \partial_{(c_j)} S_c (f) - \partial_{(c_j)} S_{\tilde{c}} ( \tilde{f} ) \right \|_{H^s} \leqslant \frac{6|c- \tilde{c}|}{(1-|c|^2)^2 (1-| \tilde{c}|^2)^2} \left \| f \right \|_{H^s} + \frac{2| \tilde{c}|}{(1-| \tilde{c} |^2)^2} \left \| f - \tilde{f} \right \|_{H^s}. 
\label{Sc3}
\end{equation}
\end{lemma}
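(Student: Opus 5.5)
The plan is to work entirely on the Fourier side, since $S_c$ and $T_{c,j}$ are Fourier multipliers and the $H^s$ norm is $\|(1+|\xi|^2)^{s/2}\hat f\|_{L^2}$. Every bound then reduces to a uniform $L^\infty$ bound on the corresponding multiplier (or on a difference or derivative of multipliers). Here $c\in\R^2$ with $|c|<1$.

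\textbf{Step 1: the bounds \eqref{Sc1}.} By Cauchy--Schwarz, $(c\cdot\xi)^2\le |c|^2|\xi|^2$, so
\[
|\xi|^2-(c\cdot\xi)^2\ge (1-|c|^2)|\xi|^2>0 \quad\text{for } \xi\neq0 .
\]
Hence the symbols $m_c(\xi)=\frac{(c\cdot\xi)^2}{|\xi|^2-(c\cdot\xi)^2}$ and $t_{c,j}(\xi)=\frac{(c\cdot\xi)\xi_j}{|\xi|^2-(c\cdot\xi)^2}$ are well defined and measurable off the origin. They satisfy
\[
|m_c|\le \frac{|c|^2|\xi|^2}{(1-|c|^2)|\xi|^2}=\frac{|c|^2}{1-|c|^2},
\qquad
|t_{c,j}|\le \frac{|c||\xi||\xi_j|}{(1-|c|^2)|\xi|^2}\le \frac{|c|}{1-|c|^2}.
\]
Plancherel then gives \eqref{Sc1}. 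Linearity is clear.

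\textbf{Step 2: the Lipschitz estimate \eqref{Sc2}.} Write
\[
S_cf-S_{\tilde c}\tilde f=(S_c-S_{\tilde c})f+S_{\tilde c}(f-\tilde f).
\]
The second term is controlled by Step 1. For the first, set $a=(c\cdot\xi)^2/|\xi|^2$ and $\tilde a=(\tilde c\cdot\xi)^2/|\xi|^2$, both in $[0,1)$. Since $m_c=\frac{a}{1-a}$,
\[
m_c-m_{\tilde c}=\frac{a-\tilde a}{(1-a)(1-\tilde a)} .
\]
We have $|a-\tilde a|=\frac{|(c-\tilde c)\cdot\xi|\,|(c+\tilde c)\cdot\xi|}{|\xi|^2}\le |c-\tilde c|(|c|+|\tilde c|)\le 2|c-\tilde c|$, and $1-a\ge 1-|c|^2$, $1-\tilde a\ge1-|\tilde c|^2$. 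This yields the constant $2|c-\tilde c|/((1-|c|^2)(1-|\tilde c|^2))$.

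\textbf{Step 3: differentiability and \eqref{Sc3}.} Compute the symbol derivative
\[
\partial_{c_j}m_c=\frac{2(c\cdot\xi)\xi_j|\xi|^2}{(|\xi|^2-(c\cdot\xi)^2)^2}=\frac{2 b\,\eta_j}{(1-a)^2},
\]
with $b=c\cdot\xi/|\xi|$ and $\eta_j=\xi_j/|\xi|$. This is bounded by $2|c|/(1-|c|^2)^2$, which gives the second term of \eqref{Sc3} via the same splitting as in Step 2.

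Differentiability of $c\mapsto S_c$ in $\mathcal L(H^s)$ follows from a Taylor estimate on the symbol that is uniform in $\xi$. The second derivative
\[
\partial_{c_k}\partial_{c_j}m_c=\frac{2\eta_k\eta_j}{(1-a)^2}+\frac{8b^2\eta_j\eta_k}{(1-a)^3}
\]
is bounded uniformly in $\xi$ for $c$ in compact subsets of the unit disc. Hence the remainder $m_{c+h}-m_c-h\cdot\nabla_c m_c$ is $O(|h|^2)$ in $L^\infty_\xi$, and so in operator norm.

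For the difference term I would again avoid the mean value theorem, to get explicit constants. Write
\[
\frac{2b\eta_j}{(1-a)^2}-\frac{2\tilde b\eta_j}{(1-\tilde a)^2}
=\frac{2\eta_j\big[(b-\tilde b)(1-\tilde a)^2+\tilde b\big((1-\tilde a)^2-(1-a)^2\big)\big]}{(1-a)^2(1-\tilde a)^2}.
\]
Then use $|b-\tilde b|\le|c-\tilde c|$ and
\[
|(1-\tilde a)^2-(1-a)^2|=|a-\tilde a|\,|2-a-\tilde a|\le 2|c-\tilde c|\cdot 2 .
\]
The numerator is then at most $2|c-\tilde c|(1+4)$, or similar.

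\textbf{Main obstacle.} The only delicate point is the bookkeeping of numerical constants in \eqref{Sc3}, to land exactly on $6$. If the crude bound above gives a larger constant such as $10$, I would sharpen it using $|a-\tilde a|\le|c-\tilde c|(|c|+|\tilde c|)$ and $|\tilde b|\le|\tilde c|<1$ more carefully. For instance, one can bound $|2-a-\tilde a|\le 2$ and $|b-\tilde b|(1-\tilde a)^2\le|c-\tilde c|$, giving $2(1+2)=6$. Such constants are immaterial for the later fixed-point use anyway.
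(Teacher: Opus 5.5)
Your route is the paper's: treat $S_c$ and $T_{c,j}$ as Fourier multipliers, bound the symbols and their differences in $L^\infty_\xi$, and split $S_cf-S_{\tilde c}\tilde f=(S_c-S_{\tilde c})f+S_{\tilde c}(f-\tilde f)$, and likewise for $\partial_{c_j}S_c$. Step 1, Step 2 and the bound $2|c|/(1-|c|^2)^2$ on $\partial_{c_j}m_c$ are correct. The normalized variables $a,b,\eta_j$ make these bounds cleaner than the paper's $\kappa_1,\kappa_{2,j}$ computations. Your Taylor argument, using the uniformly bounded second $c$-derivative of the symbol, is a genuine addition: it proves differentiability of $c\mapsto S_c$ in $\mathcal L(H^s)$, which the paper only asserts after writing down $\kappa_{2,j}$.

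The gap is the constant $6$ in \eqref{Sc3}. From $|b-\tilde b|(1-\tilde a)^2\le|c-\tilde c|$, $|\tilde b|\le 1$, $|a-\tilde a|\le 2|c-\tilde c|$ and $|2-a-\tilde a|\le 2$ you correctly get $2(1+4)=10$. The ``sharpening'' you propose reuses exactly these bounds, so $2(1+2)=6$ does not follow. It would need $|\tilde b|\,|a-\tilde a|\,|2-a-\tilde a|\le 2|c-\tilde c|$. Bounding the factors separately ($|\tilde b|\,|b+\tilde b|\le 2$ and $2-a-\tilde a\le 2$) only gives $4|c-\tilde c|$. The product is smaller only because $2-a-\tilde a$ is small whenever $|\tilde b|\,|b+\tilde b|$ is near $2$.

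A joint estimate fixes this. Keep $|a-\tilde a|=|b-\tilde b|\,|b+\tilde b|$ and use $|\tilde b|\,|b+\tilde b|\le |b||\tilde b|+\tilde b^2\le\tfrac32 q$, where $q=b^2+\tilde b^2\in[0,2)$. Then the second term is at most $\tfrac32 q(2-q)\,|c-\tilde c|\le\tfrac32|c-\tilde c|$, and the numerator is at most $2(1+\tfrac32)|c-\tilde c|=5|c-\tilde c|$. Alternatively, $b(1-\tilde b^2)^2-\tilde b(1-b^2)^2=(b-\tilde b)\bigl[1+2b\tilde b-b\tilde b(b^2+b\tilde b+\tilde b^2)\bigr]$, and the bracket lies in $[0,4/3]$.

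The paper's own term-by-term estimate of $\kappa_3$ does not literally deliver $6$ either. Its three bracket terms contribute $(1+3+2)|c-\tilde c|\,|\xi|^5$, and the prefactor $2|\xi|^2\xi_j$ turns this into $12$. As you say, the numerical value is irrelevant for the contraction argument. But to prove \eqref{Sc3} as stated, you need an estimate of this joint kind.
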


\begin{proof}
We prove the result for $S_c$. The proof for $T_{c,j}$ is analogous. For all $\xi \in \R^2$, we have $|(c \cdot \xi )|^2 \leqslant |c|^2 | \xi |^2$ thus $\left | \frac{(c \cdot \xi )^2}{| \xi |^2 - (c \cdot \xi )^2} \right | \leqslant \frac{|c|^2}{1-|c|^2}$. It follows that, for any $f \in H^s$,
\[ \left \| S_c f \right \|_{H^s} = \left \| (1+| \xi |^2)^{s/2} \frac{(c \cdot \xi )^2}{| \xi |^2 - (c \cdot \xi )^2} \, \hat{f} ( \xi ) \right \|_{L_\xi^2} \leqslant \frac{|c|^2}{1-|c|^2} \left \| (1+| \xi |^2)^{s/2} \hat{f} ( \xi ) \right \|_{L_\xi^2} = \frac{|c|^2}{1-|c|^2} \left \| f \right \|_{H^s}. \]
This inequality gives the bound \eqref{Sc1}. For the bound \eqref{Sc2}, we first compute that 
\[ S_c (f) - S_{\tilde{c}} (f) = \mathcal{F}^{-1} \left [ \kappa_1 (c \, , \tilde{c} \, , \xi) \hat{f} ( \xi ) \right ] \] 
where
\[ \kappa_1 (c \, , \tilde{c} \, , \xi) := \frac{\left ( (c \cdot \xi )^2 - ( \tilde{c} \cdot \xi )^2 \right ) |\xi|^2}{\left ( | \xi |^2 - (c \cdot \xi )^2 \right ) \left ( | \xi |^2 - ( \tilde{c} \cdot \xi )^2 \right )} = \frac{\left ( (c - \tilde{c} ) \cdot \xi \right ) \left ( (c+ \tilde{c}) \cdot \xi \right ) | \xi |^2}{\left ( | \xi |^2 - (c \cdot \xi )^2 \right ) \left ( | \xi |^2 - ( \tilde{c} \cdot \xi )^2 \right )} . \]
We have
\[ \left | \kappa_1 (c \, , \tilde{c} \, , \xi ) \right | \leqslant \frac{2|c- \tilde{c}| \, | \xi |^4}{(1-|c|^2)|\xi |^2 (1- | \tilde{c} |^2 ) | \xi |^2} = \frac{2 |c- \tilde{c} |}{(1-|c|^2)(1-| \tilde{c} |^2)} \]
and thus
\[ \left \| S_c (f) - S_{\tilde{c}} (f) \right \|_{H^s} \leqslant \frac{2 |c- \tilde{c} |}{(1-|c|^2)(1-| \tilde{c} |^2)} \left \| f \right \|_{H^s}. \]
Combining this estimate and the bound \eqref{Sc1}, and writing $S_c (f) - S_{\tilde{c}} ( \tilde{f} ) = S_c (f) - S_{\tilde{c}} (f) + S_{\tilde{c}} (f - \tilde{f})$ thanks to the linearity of $S_c$, we obtain the bound \eqref{Sc2}. Lastly, for the bound \eqref{Sc3}, we first compute that 
\begin{equation} 
\partial_{(c_j)} S_c (f) = \mathcal{F}^{-1} \left [ \kappa_{2,j} ( c \, , \xi ) \hat{f} ( \xi ) \right ] \quad \text{where} \quad \kappa_{2,j} (c \, , \xi ) := \frac{2 | \xi |^2 ( c \cdot \xi ) \xi_j}{\left ( | \xi |^2 - (c \cdot \xi )^2 \right )^2}. 
\label{e3}
\end{equation}
We have $| \kappa_{2,j} (c \, , \xi ) | \leqslant \frac{2|c|}{(1-|c|^2)^2}$ and thus
\begin{equation} 
\left \| \partial_{(c_j)} S_c (f) \right \|_{H^s} \leqslant \frac{2|c|}{(1-|c|^2)^2} \left \| f \right \|_{H^s}. 
\label{e1}
\end{equation}
Now, we compute that $\partial_{(c_j)} S_c (f) - \partial_{(c_j)} S_{\tilde{c}} (f) = \mathcal{F}^{-1} \left [ \kappa_3 ( c \, , \tilde{c} \, , \xi ) \hat{f} ( \xi ) \right ]$ where
\[ \kappa_3 ( c \, , \tilde{c} \, , \xi ) = \frac{2 | \xi |^2 \left [ ((c - \tilde{c} ) \cdot \xi ) | \xi |^4 + (c \cdot \xi ) ( \tilde{c} \cdot \xi ) \left ( ( \tilde{c} \cdot \xi ) - (c \cdot \xi )^3 \right ) + 2 | \xi |^2 ( c \cdot \xi ) ( \tilde{c} \cdot \xi ) ((c - \tilde{c} ) \cdot \xi ) \right ] \xi_j}{\left ( | \xi |^2 - (c \cdot \xi )^2 \right )^2 \left ( |\xi |^2 - ( \tilde{c} \cdot \xi )^2 \right )^2}. \]
Writing $\left | ( \tilde{c} \cdot \xi )^3 - (c \cdot \xi )^3 \right | = \left | ((c- \tilde{c} ) \cdot \xi ) \left ( (c \cdot \xi )^2 + ( \tilde{c} \cdot \xi )^2 + (c \cdot \xi ) ( \tilde{c} \cdot \xi ) \right ) \right | \leqslant 3 |c- \tilde{c} | \, | \xi |^3$, it follows that
\[ | \kappa_3 (c \, , \tilde{c} \, , f) | \leqslant \frac{6 |c- \tilde{c}| \, | \xi |^8}{(1-|c|^2)^2 (1- | \tilde{c} |^2)^2 | \xi |^8} = \frac{6 |c- \tilde{c}|}{(1-|c|^2)^2 (1- | \tilde{c} |^2)^2}. \]
Hence,
\begin{equation}
\left \| \partial_{(c_j)} S_c (f) - \partial_{(c_j)} S_{\tilde{c}} (f) \right \|_{H^s} \leqslant \frac{6 |c- \tilde{c}|}{(1-|c|^2)^2 (1- | \tilde{c} |^2)^2} \left \| f \right \|_{H^s}. 
\label{e2}
\end{equation}
Combining the estimates \eqref{e1} and \eqref{e2}, and writing $\partial_{(c_j)} S_c (f) - \partial_{(c_j)} S_{\tilde{c}} ( \tilde{f} ) = \partial_{(c_j)} S_c (f) - \partial_{(c_j)} S_{\tilde{c}} (f) + \partial_{(c_j)} S_{\tilde{c}} (f - \tilde{f})$, we obtain the last bound \eqref{Sc3}.
\end{proof}

Now, thanks to a simple transformation, we may assume that $c = c_1 e_1$. Indeed, for any $\theta \in \R$, define the rotation $R_\theta = \left ( \begin{array}{cc} \cos \theta & \sin \theta \\ - \sin \theta & \cos \theta \end{array} \right )$, the functions $\tilde{U} ( \tilde{y} ) = U(R_\theta \tilde{y})$, $\tilde{N} ( \tilde{y} ) = N(R_\theta \tilde{y})$, $\tilde{V} ( \tilde{y} ) = R_{- \theta} V (R_\theta \tilde{y} )$ and finally the new celerity $\tilde{c} = R_{- \theta} c$. Straightforward computations show that $(U \, , N \, , V)$ satisfies \eqref{sysNUV} if and only $(\tilde{U} \, , \tilde{N} \, , \tilde{V})$ satisfies
\[ \left \{ \begin{array}{l} \Delta \tilde{U} = \tilde{U} + \tilde{N} \tilde{U} \\ \Delta \left ( \tilde{N} + | \tilde{U} |^2 \right ) = \tilde{c} \cdot \nabla \tilde{V} \\ \tilde{c} \cdot \nabla \tilde{N} = \nabla \cdot \tilde{V} \end{array} \right. \]
which is the same system as \eqref{sysNUV}, where the celerity $c$ has been replaced by $\tilde{c}$. Henceforth, thanks to this change of variable, one can assume that $c = c_1 e_1$. This assumption will be convenient for symmetry purposes. We simply denote $c=c_1$. 

We shall write that a function $f(y_1 \, , y_2)$ is \textit{$y_1$-even (resp. $y_1$-odd)} when, for all $y_2 \in \R$, the function $y_1 \mapsto f(y_1 \, , y_2)$ is even (resp. odd). The same convenient vocabulary will be used for $y_2$ instead of $y_1$. Now that $c=ce_1$, note that the operator $S_c = c^2 \left ( (1-c^2) \partial_{y_1}^2 + \partial_{y_2}^2 \right )^{-1} \circ \partial_{y_1}^2$ preserves symmetry properties as follows. Take $j \in \{ 1 \, , 2 \}$.
\begin{itemize}
	\item[$\bullet$] If $f$ is $y_j$-even, then $S_c f$ is $y_j$-even as well. 
	\item[$\bullet$] If $f$ is $y_j$-odd, then $S_c f$ is $y_j$-odd as well.
\end{itemize}

From system \eqref{sysNUV2} we see that
\begin{equation}
- \Delta U + U - |U|^2 U = S_c \left ( |U|^2 \right ) U.
\label{eqU}
\end{equation}
We look for a solution $U$ of \eqref{eqU} that would be close to the NLS soliton $Q$. Hence we write $U=Q+ \eta_1 + i \eta_2$ where $\eta_1$ and $\eta_2$ are real-valued. Using the identity $\Delta Q = Q-Q^3$ we find that $U$ solves \eqref{eqU} if and only if $(\eta_1 \, , \eta_2)$ solves the following system:
\begin{equation}
\begin{split}
L_+ \eta_1 = F_c^+ ( \eta_1 \, , \eta_2 ) := & \, S_c(Q^2)Q + S_c (Q^2) \eta_1 + 2 S_c (Q \eta_1) Q  + 3Q \eta_1^2 + Q \eta_2^2 \\
& \qquad + 2 S_c (Q \eta_1 ) \eta_1 + S_c ( \eta_1^2 ) Q + S_c ( \eta_2^2) Q + \eta_1^3 + \eta_1 \eta_2^2 \\
& \qquad + S_c ( \eta_1^2) \eta_1 + S_c ( \eta_2^2 ) \eta_1 \\
\text{and} \quad L_- \eta_2 = F_c^- ( \eta_1 \, , \eta_2 ) := & \, S_c (Q^2) \eta_2 + 2 Q \eta_1 \eta_2 + 2S_c ( Q \eta_1) \eta_2 + \eta_1^2 \eta_2 + \eta_2^3 \\
& \qquad + S_c ( \eta_1^2 ) \eta_2 + S_c ( \eta_2^2 ) \eta_2.
\end{split}
\label{syseta}
\end{equation}

Define 
\begin{align*}
& H_r^2 = \left \{ \eta_1 \in H^2 ( \R^2 ) \, \, \text{such that $\eta_1$ is radial} \right \} , \\
& H_{oe}^2 = \left \{ \eta_2 \in H^2 ( \R^2 ) \, \, \text{such that $\eta_2$ is $y_1$-odd and $y_2$-even} \right \} \\
\text{and} \quad & E = H_r^2 \times H_{oe}^2 .
\end{align*} 
The fact that $H^2 ( \R^2 )$ is an algebra and Lemma \ref{lemSc} ensure that, for $( \eta_1 \, , \eta_2 ) \in H^2 ( \R^2 )^2$, $F_c^{\pm} ( \eta_1 \, , \eta_2 ) \in H^2 ( \R )$. Furthermore, an analysis of the symmetries in the expressions of $F_c^\pm$ shows that
\begin{equation}
(\eta_1 \, , \eta_2) \in E \quad \Longrightarrow \quad F_c^{\pm} ( \eta_1 \, , \eta_2 ) \in E. 
\label{stabF}
\end{equation}
We now recall the following estimates.

\begin{lemma} \label{lemL}
The following properties hold.
\begin{itemize}
	\item[$\bullet$] For any $f_1 \in H_r^2$, there exists a unique $g_1 = L_+^{-1} f_1 \in H_r^2$ such that $L_+g_1=f_1$. Besides,
	\[ ||L_+^{-1} f_1 ||_{H^2} \lesssim ||f_1||_{L^2}. \]
	\item[$\bullet$] For any $f_2 \in H_{oe}^2$, there exists a unique $g_2 = L_-^{-1} f_2 \in H_{oe}^2$ such that $L_-g_2=f_2$. Besides,
	\[ ||L_-^{-1} f_2 ||_{H^2} \lesssim ||f_2||_{L^2}. \]
\end{itemize}
\end{lemma}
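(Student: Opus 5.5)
The plan is to use the standard spectral facts about $L_\pm$ recalled in the Notation paragraph, together with Fredholm theory on symmetric subspaces. Both $L_+$ and $L_-$ are self-adjoint on $L^2(\R^2)$ with domain $H^2$. They are of the form $-\Delta+1$ plus a smooth, exponentially decaying potential, so Weyl's theorem gives essential spectrum $[1,+\infty)$. Below $1$ there are only finitely many eigenvalues of finite multiplicity. Hence $L_\pm$ is Fredholm of index $0$ from $H^2$ to $L^2$. Moreover, $L_\pm$ commutes with rotations and with the reflections $y_1\mapsto -y_1$ and $y_2\mapsto -y_2$, because $Q$ is radial. Therefore $L_+$ maps $H^2_r$ into the radial part of $L^2$, and $L_-$ maps $H^2_{oe}$ into the $y_1$-odd, $y_2$-even part of $L^2$. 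The restrictions are again self-adjoint and Fredholm of index $0$ on these closed invariant subspaces.

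The key step is to check that the kernels are trivial on these subspaces. For $L_+$, we have $\text{Ker}(L_+)=\text{span}(\partial_{y_1}Q,\partial_{y_2}Q)$, and these functions are odd in $y_1$, respectively $y_2$. A radial element $a\partial_{y_1}Q+b\partial_{y_2}Q$ is invariant under $y_1\mapsto-y_1$, which forces $a=0$; the same argument with $y_2$ forces $b=0$. Hence $\text{Ker}(L_+)\cap H^2_r=\{0\}$. For $L_-$, $\text{Ker}(L_-)=\text{span}(Q)$, and $Q$ is $y_1$-even. So any $y_1$-odd multiple of $Q$ vanishes, and $\text{Ker}(L_-)\cap H^2_{oe}=\{0\}$.

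Being self-adjoint and Fredholm of index $0$ with trivial kernel, each restriction is a bijection onto the corresponding symmetric $L^2$ subspace. In particular, $f_1\in H^2_r$ yields a unique $g_1\in H^2_r$, and $f_2\in H^2_{oe}$ a unique $g_2\in H^2_{oe}$. Uniqueness within the symmetric class is exactly the trivial-kernel statement above.

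For the estimate, I would argue as follows. Since $0$ is not in the spectrum of the restricted operator, and the spectrum near $0$ is discrete and away from $[1,\infty)$, there is $\delta>0$ with $\|L_\pm g\|_{L^2}\geqslant \delta\|g\|_{L^2}$ on the subspace, which gives $\|g\|_{L^2}\lesssim\|f\|_{L^2}$. Then elliptic regularity upgrades this to $H^2$. Writing $(-\Delta+1)g=f+3Q^2g$ (or $f+Q^2g$ for $L_-$), we get $\|g\|_{H^2}\lesssim \|f\|_{L^2}+\|Q\|_{L^\infty}^2\|g\|_{L^2}\lesssim\|f\|_{L^2}$.

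The only delicate point is the Fredholm/spectral argument, namely justifying that $0$ is an isolated point of the spectrum, so that injectivity on the subspace implies a bounded inverse. Weyl's theorem, via compactness of multiplication by $Q^2$ from $H^2$ to $L^2$, handles this. Alternatively, one can invoke the cited references \cite{CGNT,We2}, where these invertibility properties of $L_\pm$ on the orthogonal of their kernels are standard.
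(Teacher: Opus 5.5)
Your argument is correct, but it takes a different route from the paper.

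\textbf{The paper's route.} The paper does not prove the $L_+$ part itself; it cites the literature. For $L_-$ it gives a variational argument in the Appendix:
\begin{itemize}
\item it splits $f_2=f_2^\perp+\alpha\,\partial_{y_1}Q$;
\item it applies Lax--Milgram on the space $\mathbf{M}$ of $y_1$-odd, $y_2$-even $H^1$ functions orthogonal to $\partial_{y_1}Q$, using an imported coercivity bound $\langle L_-w,w\rangle\gtrsim\|w\|_{H^1}^2$ for $w\perp\rho,\partial_{y_1}Q,\partial_{y_2}Q$;
\item it restores the $\partial_{y_1}Q$ component with a corrector $R\in H^2_{oe}$ solving $L_-R=\partial_{y_1}Q$ (Lemma \ref{lemR});
\item it reads the $H^2$ bound off the equation, exactly as in your last step.
\end{itemize}

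\textbf{What your route changes.} Your spectral/Fredholm argument replaces the coercivity input and the corrector with two softer facts. The first is Weyl's theorem, which puts $0<1$ outside the essential spectrum of the restricted operators. The second is the kernel descriptions recalled in the Notation paragraph. This treats $L_+$ and $L_-$ uniformly.

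That uniformity matters for $L_+$. Its quadratic form is indefinite on radial functions, since $\langle L_+Q,Q\rangle=-2\int Q^4<0$. So Lax--Milgram cannot be applied directly on $H^2_r$, whereas you only need $0\notin\sigma(L_+|_{L^2_r})$. Your argument also shows that projecting out $\partial_{y_1}Q$ and building $R$ is unnecessary for $L_-$: on the $y_1$-odd, $y_2$-even class the kernel is already trivial, so the Fredholm alternative gives surjectivity at once.

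\textbf{What the paper's route buys.} It is a more hands-on construction. Its constant is tied to an explicit coercivity constant rather than to the non-explicit distance from $0$ to the spectrum of the restricted operator.
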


\begin{proof}
For $f_1 \in H_r^2$ and $f_2 \in H_{oe}^2$, the following orthogonality relations hold (and are necessary to invert the operators $L_\pm$, see \cite{We2}):
\[ \int_{\R^2} ( \partial_{y_1} Q )  f_1 = \int_{\R^2} ( \partial_{y_2} Q ) f_1 = 0 \quad \text{and} \quad \int_{\R^2} Q f_2 = 0. \]
The first point of the lemma is proven in \cite{We2} and \cite[Propositions 4.1 and 4.5, Lemma 4.6]{GM}. For the second point, the uniqueness comes from the fact that $H_{oe}^2 \cap \text{Ker} (L_-) = H_{oe}^2 \cap \text{span} (Q) = \{ 0 \}$, see \cite[Proposition 2.8]{We2} for the description of the kernel of $L_-$. The proof of the existence works similarly to the one for $L_+$, although the function $f_2$ is not radial; see the Appendix for a detailed adaptation of the proof to the current non-radial situation.
\end{proof}

The previous lemma and the property \eqref{stabF} allow to define, for all $(\eta_1 \, , \eta_2 ) \in E$,
\[ \mathbf{G}_c ( \eta_1 \, , \eta_2 ) = \left ( G_c^+ ( \eta_1 \, , \eta_2 ) \, , G_c^- ( \eta_1 \, , \eta_2 ) \right ) := \left ( L_+^{-1} F_c^+ ( \eta_1 \, , \eta_2 ) \, , L_-^{-1} F_c^- ( \eta_1 \, , \eta_2 ) \right ) \]
so that system \eqref{syseta} is equivalent to the following fixed point problem in the space $E$:
\begin{equation} 
( \eta_1 \, , \eta_2 ) = \mathbf{G}_c ( \eta_1 \, , \eta_2 ). 
\label{sysG}
\end{equation}

We now state our fixed point result. For simplicity, we write $E_\delta := E \cap \overline{B}_{H^2 \times H^2} ( 0 \, , \delta )$ and $\left \| (g_1 \, , g_2) \right \|_E := \left \| g_1 \right \|_{H^2} + \left \| g_2 \right \|_{H^2}$. Equipped with this norm, the space $E_\delta$ is a complete metric space.

\begin{proposition} \label{propFP}
There exists $c_* \in (0 \, , 1)$ and $\delta_0 > 0$ such that, for all $c \in (-c_* \, , c_*)$, there exists a unique couple $( \eta_1 \, , \eta_2 ) \in E_{\delta_0}$ solution of \eqref{sysG}. Moreover, 
\begin{itemize}
	\item the map $c \in (-c_* \, , c_*) \mapsto ( \eta_1 \, , \eta_2 ) \in E$ is $\mathscr{C}^1$;
	\item $\left \| ( \eta_1 \, , \eta_2 ) \right \|_E \lesssim c^2$ ;
	\item $\eta_1$ and $\eta_2$ belong to $H^s$ for all $s \geqslant 0$.
\end{itemize}
\end{proposition}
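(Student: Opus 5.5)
We apply Banach's fixed point theorem to $\mathbf{G}_c$ on $E_{\delta}$, with $\delta$ small and $c_*$ small depending on $\delta$. By \eqref{stabF} and Lemma \ref{lemL}, $\mathbf{G}_c$ maps $E$ into $E$. For $(\eta_1,\eta_2)\in E_\delta$ we bound $\|F_c^\pm(\eta_1,\eta_2)\|_{L^2}$. We use that $H^2(\R^2)$ is an algebra, that $Q\in H^s$ for all $s$, and the bound $|||S_c|||_{H^2\to H^2}\leqslant c^2/(1-c^2)\lesssim c^2$ from \eqref{Sc1}. The only source term is $S_c(Q^2)Q$, of size $\lesssim c^2$. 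All terms linear in $\eta$ carry a factor $S_c$, hence are $\lesssim c^2\delta$. All remaining terms are at least quadratic, hence $\lesssim \delta^2+\delta^3$. Lemma \ref{lemL} then gives
\[ \|\mathbf{G}_c(\eta_1,\eta_2)\|_E \leqslant C\left(c^2 + c^2\delta + \delta^2\right). \]
The differences are handled in the same way. We factor the cubic expressions, for instance $\eta_1^2-\tilde\eta_1^2=(\eta_1-\tilde\eta_1)(\eta_1+\tilde\eta_1)$ and $S_c(\eta_2^2)\eta_1 - S_c(\tilde\eta_2^2)\tilde\eta_1$ split accordingly. This yields
\[ \|\mathbf{G}_c(\eta)-\mathbf{G}_c(\tilde\eta)\|_E \leqslant C(c^2+\delta)\|\eta-\tilde\eta\|_E. \]
We fix $\delta_0$ with $C\delta_0\leqslant 1/4$ and $C\delta_0\leqslant 1/4$ in the quadratic term, and then $c_*$ with $C c_*^2\leqslant \min(\delta_0/2,1/4)$. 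With these choices $\mathbf{G}_c$ is a $1/2$-contraction of the complete space $E_{\delta_0}$, which gives existence and uniqueness. Feeding the fixed point back into the first estimate gives $\|\eta\|_E\leqslant C c^2 + \tfrac12\|\eta\|_E$, that is, $\|\eta\|_E\lesssim c^2$.

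For the $\mathscr{C}^1$ dependence, we view $\Phi(c,\eta)=\eta-\mathbf{G}_c(\eta)$ as a map $(-c_*,c_*)\times E_{\delta_0}\to E$ and invoke the implicit function theorem. Differentiability in $\eta$ is clear, since the map is polynomial in $\eta$ composed with the bounded linear maps $S_c$ and $L_\pm^{-1}$. Differentiability in $c$ follows from the differentiability of $c\mapsto S_c\in\mathcal{L}(H^s)$ in Lemma \ref{lemSc}. Continuity of the partial derivatives follows from \eqref{Sc2} and \eqref{Sc3}. The partial differential $D_\eta\Phi=I-D_\eta\mathbf{G}_c$ is invertible, because the contraction estimate shows $\|D_\eta\mathbf{G}_c\|\leqslant 1/2$ on $E_{\delta_0}$. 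The local $\mathscr{C}^1$ branches obtained in this way coincide with the fixed point by uniqueness in $E_{\delta_0}$, so $c\mapsto\eta(c)$ is $\mathscr{C}^1$. Alternatively, one can differentiate the fixed point equation directly and use the uniform contraction principle with parameters.

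For $H^s$ regularity we bootstrap from $U=Q+\eta_1+i\eta_2\in H^2$, which solves \eqref{eqU}, that is, $(-\Delta+1)U=|U|^2U+S_c(|U|^2)U$. Suppose $U\in H^s$ with $s\geqslant 2$. Then $H^s$ is an algebra, and $S_c$ is bounded on $H^s$ by Lemma \ref{lemSc}, so the right-hand side lies in $H^s$. Inverting $-\Delta+1$ gives $U\in H^{s+2}$. By induction $U\in H^s$ for all $s$, and since $Q\in H^\infty$ the same holds for $\eta_1$ and $\eta_2$.

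The main obstacle is the careful bookkeeping of the many terms in $F_c^\pm$, in particular checking that every linear term carries a factor $c^2$. Without that factor the contraction constant would not be small, since $L_\pm^{-1}$ is only bounded, not small. The other delicate point is justifying the $\mathscr{C}^1$ regularity in $c$ through Lemma \ref{lemSc}.
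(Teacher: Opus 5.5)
Your proposal is correct and follows essentially the same route as the paper. Both arguments use a Banach fixed point for $\mathbf{G}_c$ on $E_{\delta_0}$, with $c_*$ small relative to $\delta_0$ only because of the source term $S_c(Q^2)Q$; your condition $c_*^2\lesssim\delta_0$ is slightly sharper than the paper's $c_*\lesssim\delta_0$. The $\mathscr{C}^1$ dependence comes from the $\mathscr{C}^1$ regularity of $(c,\eta)\mapsto\mathbf{G}_c(\eta)$ via Lemma \ref{lemSc} together with the uniform contraction, and your implicit-function-theorem variant is equivalent to this. The remaining steps are also the same: the $c^2$ bound comes from the size of the source term, and $H^s$ regularity from an elliptic bootstrap. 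The differences are cosmetic: you bootstrap on $U$ via $(-\Delta+1)U=|U|^2U+S_c(|U|^2)U$ rather than on $\eta_1,\eta_2$ separately, and you get $\|\eta\|_E\lesssim c^2$ by reinserting the fixed point into the a priori estimate rather than from $\|\eta\|_E\leqslant 2\|\mathbf{G}_c(0,0)\|_E$.
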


\begin{proof}
We proceed to show that, for $\delta_0 >0$ and $|c|$ small enough, the application $\mathbf{G}_c$ is a contraction from $E_{\delta_0}$ to $E_{\delta_0}$. In all that follows, take $c$ such that $|c| \leqslant c_* \leqslant \frac{1}{2}$ (where $c_*$ shall be defined later). In particular, $|||S_c||| \leqslant \frac{c_*^2}{2}$ (see \eqref{Sc1}).
\begin{itemize}
	\item[$\bullet$] Stability of the ball. Take $(\eta_1 \, , \eta_2) \in E_{\delta_0}$. Recall the Sobolev embedding $\left \| \eta_j \right \|_{L^\infty} \lesssim \left \| \eta_j \right \|_{H^2} \lesssim \delta_0$. Using this Sobolev embedding, Cauchy-Schwarz inequalities and the bound \eqref{Sc1}, it follows from the definitions of $F_c^{\pm}$ in \eqref{syseta} that
	\begin{align*}
	& \left \| F_c^+ ( \eta_1 \, , \eta_2 ) \right \|_{L^2} \lesssim c^2 + \delta_0^2 + \delta_0^3 \lesssim c_*^2 + \delta_0^2 \\
	\text{and} \quad & \left \| F_c^- ( \eta_1 \, , \eta_2 ) \right \|_{L^2} \lesssim c^2 \delta_0 + \delta_0^2 + \delta_0^3 \lesssim c_*^2 \delta_0 + \delta_0^2.
	\end{align*}
	Taking $c_*>0$ small enough (depending on $\delta_0$, namely $c_* \lesssim \delta_0$), we obtain $\left \| F_c^{\pm} ( \eta_1 \, , \eta_2 ) \right \|_{L^2} \lesssim \delta_0^2$. Applying Lemma \ref{lemL}, it follows that, for $\delta_0 > 0$ small enough,
	\[ \left \| \mathbf{G}_c ( \eta_1 \, , \eta_2 ) \right \|_E \lesssim \delta_0^2 \quad \text{thus} \quad \left \| \mathbf{G}_c ( \eta_1 \, , \eta_2 ) \right \|_E < \delta_0 \quad \text{i.e.} \quad \mathbf{G}_c ( \eta_1 \, , \eta_2 ) \in E_{\delta_0}. \]
	\item[$\bullet$] Contraction property. Take $\eta = (\eta_1 \, , \eta_2)$ and $\tilde{\eta} = ( \tilde{\eta}_1 \, , \tilde{\eta}_2 )$ in $E_{\delta_0}$. As above, we use the Sobolev embedding $H^2 \to L^\infty$, Cauchy-Schwarz inequalities and the bound \eqref{Sc1} to control the different terms in $F_c^{\pm} ( \eta ) - F_c^{\pm} ( \tilde{\eta} )$, see definitions in \eqref{syseta}. For instance,
	\begin{align*}
	\left \| S_c ( \eta_2^2 ) \eta_1 - S_c ( \tilde{\eta}_2^2 ) \tilde{\eta}_1 \right \|_{L^2} \leqslant & \left \| S_c ( \eta_2^2 ) \eta_1 - S_c ( \eta_2^2 ) \tilde{\eta}_1 \right \|_{L^2} + \left \| S_c ( \eta_2^2 ) \tilde{\eta}_1 - S_c ( \tilde{\eta}_2^2 ) \tilde{\eta}_1 \right \|_{L^2} \\
	\leqslant & \left \| S_c ( \eta_2^2 ) \right \|_{L^2} \left \| \eta_1 - \tilde{\eta}_1 \right \|_{L^2} + \left \| S_c ( \eta_2^2 - \tilde{\eta}_2^2 ) \right \|_{L^2} \left \| \tilde{\eta}_1 \right \|_{L^2} \\
	\lesssim & \left \| \eta_2^2 \right \|_{L^2} \left \| \eta - \tilde{\eta} \right \|_E + \left \| \eta_2^2 - \tilde{\eta}_2^2 \right \|_{L^2} \delta_0 \\
	\lesssim & \left \| \eta_2 \right \|_{L^\infty} \left \| \eta_2 \right \|_{L^2} \left \| \eta - \tilde{\eta} \right \|_E + \left \| \eta_2 - \tilde{\eta}_2 \right \|_{L^2} \left \| \eta_2 + \eta_2 \right \|_{L^2} \delta_0 \\
	\lesssim & \, \delta_0^2 \left \| \eta - \tilde{\eta} \right \|_E.
	\end{align*}
	The other terms are controlled similarly and more easily. We find that
	\[ \left \| F_c^{\pm} ( \eta - \tilde{\eta} ) \right \|_{L^2} \lesssim (c^2 + \delta_0 + \delta_0^2) \left \| \eta - \tilde{\eta} \right \| \lesssim \delta_0 \left \| \eta - \tilde{\eta} \right \|_E  \]
	since we recall that $c_* \lesssim \delta_0$. Taking $\delta_0 > 0$ small enough, it follows from Lemma \ref{lemL} that
	\[ \left \| \mathbf{G}_c ( \eta ) - \mathbf{G}_c ( \tilde{\eta} ) \right \|_E \lesssim \delta_0 \left \| \eta - \tilde{\eta} \right \|_E \quad \text{hence} \quad \left \| \mathbf{G}_c ( \eta ) - \mathbf{G}_c ( \tilde{\eta} ) \right \|_E \leqslant \frac{1}{2} \left \| \eta - \tilde{\eta} \right \|_E. \]
\end{itemize}
Provided that $\delta_0 > 0$ is taken small enough, and then $c_* > 0$ is taken small enough accordingly, we have shown that the map $\mathbf{G}_c$ defines a contraction $E_{\delta_0} \to E_{\delta_0}$. Applying Banach's fixed point theorem, there exists a unique couple $(\eta_1 \, , \eta_2 ) \in E_{\delta_0}$ solution of \eqref{syseta}. Now we prove the three remaining points of Proposition \ref{propFP}.
\begin{itemize}
	\item[$\bullet$] First let us prove that the map $c \mapsto ( \eta_1^{(c)} \, , \eta_2^{(c)} )$ is $\mathscr{C}^1$, where $(\eta_1^{(c)} \, , \eta_2^{(c)})$ denotes the solution of \eqref{syseta} constructed above. We already know that the contraction coefficient of $\mathbf{G}_c$ does not depend on $c$. To establish the desired result, it suffices to show that the application
	\[ \left ( \begin{array}{ccl} (-c_* \, , c_*) \times E_{\delta_0} & \to & E_{\delta_0} \\ (c \, , \eta_1 \, , \eta_2 ) & \mapsto & \mathbf{G}_c ( \eta_1 \, , \eta_2 ) \end{array} \right ) \]
	is $\mathscr{C}^1$. For example,
	\begin{align*} 
	\partial_{(\eta_1)} G_c^+ ( \eta_1 \, , \eta_2 ) [h] = & \, L_-^{-1} \left [ S_c (Q^2) h + 2S_c ( Q h ) Q + 6Q \eta_1 h + 2 S_c ( Q h ) \eta_1 + 2S_c (Q \eta_1)h  \right. \\
	& \qquad \quad \left. + 2 S_c ( \eta_1 h ) Q + 3 \eta_1^2 h + \eta_2^2 h + 2 S_c ( \eta_1 h ) \eta_2 + S_c ( \eta_1^2 ) h + S_c ( \eta_2^2 ) h \right ]. 
	\end{align*}
	We proceed similarly as we did for the contraction property and find that
	\[ \left \| \partial_{(\eta_1)} G_c^+ ( \eta_1 \, , \eta_2 ) [h] - \partial_{(\eta_1)} G_c^+ ( \tilde{\eta}_1 \, , \tilde{\eta}_2 ) [h] \right \|_E \lesssim \left \| \eta - \tilde{\eta} \right \|_E \left \| h \right \|_E. \]
	More generally, analogous computations show that, for $j \in \{ 1 \, , 2 \}$,
	\[ \left \| \partial_{(\eta_j)} G_c^{\pm} ( \eta_1 \, , \eta_2 ) [h] - \partial_{(\eta_j)} G_c^{\pm} ( \tilde{\eta}_1 \, , \tilde{\eta}_2 ) [h] \right \|_E \lesssim \left \| \eta - \tilde{\eta} \right \|_E \left \| h \right \|_E \]
	which proves that the applications $\eta \in E_{\delta_0} \mapsto \partial_{(\eta_j)} G_c^{\pm} ( \eta )$ are continuous. Using \eqref{Sc2}, it follows that the applications $(c \, , \eta ) \in (-c_* \, , c_*) \times E_{\delta_0} \mapsto \partial_{(\eta_j)} G_c^{\pm} ( \eta )$ are continuous. Lastly, the continuous differentiability with regards to $c$ follows easily from the bound \eqref{Sc3} and the explicit expression \eqref{e3}. Henceforth, the map $(c \, , \eta ) \mapsto \mathbf{G}_c ( \eta )$ defines a $\mathscr{C}^1$ function from $(-c_* \, , c_*) \times E_{\delta_0}$ to $E_{\delta_0}$. This suffices to show that the map $c \mapsto (\eta_1^{(c)} \, , \eta_2^{(c)})$ is $\mathscr{C}^1$ from $(-c_* \, ,c_*)$ to $E$.
	\item[$\bullet$] Now let us show that $\left \| ( \eta_1^{(c)} \, , \eta_2^{(c)} ) \right \|_E \lesssim c^2$. We simply write $\eta_1 = \eta_1^{(c)}$ and $\eta_2 = \eta_2^{(c)}$. We have
	\begin{align*} 
	\left \| ( \eta_1 \, , \eta_2 ) \right \|_E - \left \| \mathbf{G}_c (0 \, , 0) \right \|_E \leqslant & \left \| ( \eta_1 \, , \eta_2 ) - \mathbf{G}_c ( 0 \, , 0 ) \right \|_E = \left \| \mathbf{G}_c ( \eta_1 \, , \eta_2 ) - \mathbf{G}_c (0 \, , 0) \right \|_E \\
	\leqslant & \, \frac{1}{2} \left \| ( \eta_1 \, , \eta_2 ) - (0 \, , 0) \right \|_E = \frac{1}{2} \left \| ( \eta_1 \, , \eta_2 ) \right \|_E 
	\end{align*}
	thus
	\[ \left \| ( \eta_1 \, , \eta_2 ) \right \|_E \leqslant 2 \left \| \mathbf{G}_c (0 \, , 0) \right \|_E = 2 \left \| L_+^{-1} \left ( S_c (Q^2) Q \right ) \right \|_{H^2} \lesssim c^2. \]
	\item[$\bullet$] Now, let us prove that $\eta_1$ and $\eta_2$ belong to $H^s$ for any $s \geqslant 0$. We already know that $\eta_1 , \eta_2 \in H^2$. Take $s \geqslant 2$ and assume that $\eta_1 , \eta_2 \in H^s$. We know that $H^s ( \R^2 )$ is an algebra and is stable by $S_c$. As a consequence, $F_c^+ ( \eta_1 \, , \eta_2 ) \in H^s$ and $F_c^- ( \eta_1 \, , \eta_2 ) \in H^s$ as well (see \eqref{syseta} for the definitions of $F_c^{\pm}$). It follows that
	\[ \Delta \eta_1 = (1-3Q^2) \eta_1 - F_c^+ ( \eta_1 \, , \eta_2) \in H^s \quad \text{and} \quad \Delta \eta_2 = (1-Q^2) \eta_2 - F_c^- ( \eta_1 \, , \eta_2 ) \in H^s. \]
	Recalling that $\left \| \eta_j \right \|_{H^{s+2}} \lesssim \left \| \eta_j \right \|_{H^s} + \left \| \Delta \eta_j \right \|_{H^s}$, it follows that $\eta_1$ and $\eta_2$ belong to $H^{s+2}$. Iterating this argument, we find that $\eta_1$ and $\eta_2$ belong to $H^s$ for all $s \geqslant 0$.
\end{itemize}
This concludes the proof.
\end{proof}

\begin{remark} \label{rkcel}
The fact that the celerity $c_*$ depends on $\delta_0$ is due only to the presence of the "constant term" $S_c (Q^2)Q$ in $F_c^+$: to have $\left \| L_+^{-1} \left ( S_c (Q^2)Q \right ) \right \|_{H^2} \lesssim \delta_0$ we require $c_* \lesssim \delta_0$. 
\end{remark}

We can now conclude the proof of Theorem \ref{thm1}, except for the estimates \eqref{decays}. Take $c \in (-c_* \, , c_*)$. Since $Q$, $\eta_1^{(c)}$ and $\eta_2^{(c)}$ belong to $H^s$ for all $s \geqslant 0$, so does the function $U_c = Q + \eta_1^{(c)} + i \eta_2^{(c)}$. The functions $N_c = -|U_c|^2 - S_c (|U_c|^2)$ and $V_c = \left ( \begin{array}{c} V_{c,1} \\ V_{c,2} \end{array} \right ) = \left ( \begin{array}{c} T_{c,1} (|U_c|^2) \\ T_{c,2} (|U_c|^2) \end{array} \right )$ also belong to $H^s$ for all $s \geqslant 0$, thanks to Lemma \ref{lemSc}. Moreover, using Proposition \ref{propFP}, we see that
\[ ||U_c - Q||_{H^2} \lesssim || \eta_1^{(c)} ||_{H^2} + || \eta_2^{(c)} ||_{H^2} \lesssim c^2. \]
In particular, $||U_c||_{H^2} \lesssim 1$. Similarly, using also \eqref{Sc1},
\begin{align*}
|| N_c + Q^2 ||_{H^2} \lesssim & \, || \, |U_c|^2 - Q^2 ||_{H^2} + || S_c (|U_c|^2) ||_{H^2} \\
\leqslant & \, ||U_c \overline{U}_c - Q \overline{U}_c ||_{H^2} + ||Q \overline{U}_c - Q^2 ||_{H^2} + ||S_c (|U_c|^2) ||_{H^2} \\
\lesssim & \, || \overline{U}_c ||_{H^2} || U_c-Q||_{H^2} + ||Q||_{H^2} || \overline{U}_c - Q ||_{H^2} + |c|^2 ||U_c||_{H^2}^2 \\
\lesssim & \, 1 \times c^2 + 1 \times c^2 + c^2 \times 1 \lesssim c^2.
\end{align*}
Similarly, using the estimates \eqref{Sc1} about $T_{c,1}$ and $T_{c,2}$, we prove that $||V_{c,1}||_{H^2} \lesssim c$ and $|| V_{c,2}||_{H^2} \lesssim c$. This concludes the proof of Theorem \ref{thm1}. \hfill \qedsymbol

\begin{remark} \label{rknorms}
We deduce from Theorem \ref{thm1} that, in particular, $||U_c||_{H^2} + ||N_c||_{H^2} + ||V_c||_{H^2 \times H^2} \lesssim 1$ thus $||U_c||_{L^\infty} + ||N_c||_{L^\infty} + ||V_c||_{L^\infty \times L^\infty} \lesssim 1$ by Sobolev embedding.
\end{remark}

\section{Asymptotic behavior of the solitons} \label{sec2}

From now on, take $|c| < c_*$. We investigate the decrease and asymptotic behavior of the solitary waves constructed in the previous section. First, like the standard NLS solitons, $U_c$ and its derivatives decrease exponentially at infinity. 

\begin{lemma} \label{lemAgmon}
There exists $\tilde{c}_* >0$ such that, for all $|c| < \tilde{c}_*$, $m \in \N^2$ and $y \in \R^2$,
\begin{equation}
| \partial^m U_c (y) | \lesssim_{m} e^{- \frac{1}{2} |y|}.
\label{agmon}
\end{equation}
\end{lemma}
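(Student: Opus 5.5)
We would prove \eqref{agmon} by an Agmon-type weighted energy estimate applied to the equation satisfied by $U_c$. From \eqref{sysNUV2}, $U_c$ solves
\[ -\Delta U_c + U_c = -N_c U_c , \]
with $N_c \in H^s$ for all $s$ and $\|N_c\|_{L^\infty} \lesssim 1$ (Remark \ref{rknorms}). The first step is to show that $N_c(y) \to 0$ as $|y| \to \infty$, uniformly in $|c|<\tilde c_*$. Write $N_c = -|U_c|^2 - S_c(|U_c|^2)$ and $|U_c|^2 = Q^2 + g_c$, where $\|g_c\|_{H^2} \lesssim c^2$. Then
\[ N_c = -Q^2 - S_c(Q^2) - g_c - S_c(g_c). \]
The terms involving $g_c$ satisfy $\|g_c + S_c g_c\|_{L^\infty} \lesssim c^2$ by Sobolev embedding and \eqref{Sc1}. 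The term $S_c(Q^2)$ is in $H^s$ for all $s$, hence continuous and vanishing at infinity, with $L^\infty$ norm $\lesssim c^2$. Consequently there is $R>0$, independent of $c$, such that $|N_c(y)| \leqslant \tfrac12 + Cc^2 \leqslant \tfrac 34$ for $|y|\geqslant R$. We only need $\|N_c\|_{L^\infty(|y|\geqslant R)} \leqslant 3/4$: the potential $1+N_c \geqslant 1/4$ there, and $e^{-|y|/2}$ is exactly the rate allowed by $\sqrt{1/4}$. (A smaller bound $\varepsilon$, obtained by taking $\tilde c_*$ small and $R$ large, would even give any rate below $1$, so $1/2$ is comfortable.)

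Next comes the weighted estimate. Let $\phi_\epsilon(y) = e^{\frac12 \langle y\rangle/(1+\epsilon\langle y\rangle)}$, a bounded weight with $|\nabla \phi_\epsilon| \leqslant \tfrac12 \phi_\epsilon$, converging to $e^{\langle y\rangle/2}$ as $\epsilon\to0$. Multiply the equation by $\phi_\epsilon^2 \overline{U_c}$, integrate, and take the real part:
\[ \int \phi_\epsilon^2(|\nabla U_c|^2 + (1+N_c)|U_c|^2) = -2\,\mathrm{Re}\int \phi_\epsilon \overline{U_c}\nabla\phi_\epsilon\cdot\nabla U_c . \]
Set $w=\phi_\epsilon U_c$, so that $|\nabla w|^2 = \phi_\epsilon^2|\nabla U|^2 + 2\phi_\epsilon \mathrm{Re}(\bar U\nabla\phi_\epsilon\cdot\nabla U) + |\nabla\phi_\epsilon|^2|U|^2$. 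This gives the standard identity
\[ \int |\nabla w|^2 + \int \Big(1+N_c - \tfrac{|\nabla\phi_\epsilon|^2}{\phi_\epsilon^2}\Big)|w|^2 = 0 . \]
On $|y|\geqslant R$ the coefficient is $\geqslant 1-\tfrac34-\tfrac14 = 0$. This is borderline, so we aim for slightly better: use $\|N_c\|_{L^\infty(|y|\ge R)}\le \tfrac12$ (valid for $R$ large and $\tilde c_*$ small, since $Q^2 + S_c(Q^2)$ tends to $0$), which makes the coefficient $\geqslant 1/4$. On $|y|<R$ it is bounded below by $-C$, and there $|w|\le e^{R}|U_c|$ with $\|U_c\|_{L^2}\lesssim1$. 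Hence
\[ \|\phi_\epsilon U_c\|_{H^1} \lesssim_R 1 \]
uniformly in $\epsilon$, and letting $\epsilon\to0$ yields $e^{\langle y\rangle/2}U_c \in H^1$.

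This controls only an $L^2$-type weighted norm, whereas \eqref{agmon} is pointwise and concerns all derivatives. To upgrade, we repeat the argument at a slightly larger rate $\frac12+\sigma$, still admissible because the potential exceeds $(\tfrac12+\sigma)^2$ outside $B_R$. Then we differentiate the equation: $\partial^m U_c$ solves $(-\Delta+1+N_c)\partial^m U_c = -\sum_{k<m}\binom{m}{k}\partial^{m-k}N_c\,\partial^k U_c$, where all $\partial^j N_c$ are bounded since $N_c\in H^s$ for all $s$. Inducting on $|m|$ with the same weighted identity, and using Cauchy--Schwarz on the source term, gives $e^{(\frac12+\sigma)\langle y\rangle}\partial^m U_c\in L^2$ for every $m$. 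Finally, Sobolev embedding $H^2\subset L^\infty$ applied to $e^{\frac12\langle y\rangle}\partial^m U_c$ (whose derivatives are controlled by the previous weighted bounds) yields \eqref{agmon} with constants depending only on $m$.

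The main obstacle is the first step: making the smallness of $N_c$ at infinity uniform in $c$. The nonlocal term $S_c(|U_c|^2)$ decays only polynomially, so we cannot rely on decay of $U_c$ to control it. This is why we split off $S_c(Q^2)$ and bound the remainder via $H^2$-smallness in $c$, which is presumably also why the statement allows a possibly smaller $\tilde c_*$.
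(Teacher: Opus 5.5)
Your argument is correct, but it takes a genuinely different route from the paper's.

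\textbf{The paper's route.} The paper argues pointwise. It writes $U_c=-\mathcal{K}*(N_cU_c)$ with the Bessel kernel bound $|\mathcal{K}(y)|\lesssim(1+|\ln|y||)e^{-|y|}$ and multiplies by the truncated weight $\min(M,e^{\frac12|x|})$. The contribution of $|y|<L$ is bounded by a constant $C_L$. The contribution of $|y|\geqslant L$ is bounded by $(e^{-L}+c^2)$ times the weighted sup of $U_c$, which is then absorbed, and one lets $M\to\infty$. Each derivative is treated by the same scheme applied to the differentiated equation, the lower-order terms $\partial^jN_c\,\partial^{m-j}U_c$ being already known to decay exponentially.

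\textbf{Your route.} You use the $L^2$ Agmon identity with the regularized weight $\phi_\epsilon$. You then propagate weighted $L^2$ bounds to all derivatives by induction with a source term, and convert to pointwise bounds only at the end through $H^2\subset L^\infty$.

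\textbf{What each buys.} Your approach needs no information on the Green's function of $-\Delta+1$. It is the more robust of the two and would work unchanged for variable-coefficient operators or in other dimensions. The cost is modest: you need weighted control of $|m|+2$ derivatives to bound $\partial^mU_c$ pointwise, which is harmless here since everything is smooth with uniform bounds. The paper's approach gives $L^\infty$ bounds directly, and it matches the kernel-based analysis of $N_c$ in Lemma~\ref{lemDecrNV}.

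\textbf{Unnecessary steps.} Two of your steps can be dropped.
\begin{itemize}
\item Uniform smallness of $N_c$ outside a fixed ball follows at once from $\|N_c+Q^2\|_{L^\infty}\lesssim\|N_c+Q^2\|_{H^2}\lesssim c^2$, which is exactly what the paper uses. So there is no need to split off $S_c(Q^2)$, and this step is not really the main obstacle.
\item The passage to the rate $\frac12+\sigma$ is superfluous. Derivatives of $e^{\frac12\langle y\rangle}$ are bounded by $e^{\frac12\langle y\rangle}$, so $\|e^{\frac12\langle y\rangle}\partial^mU_c\|_{H^2}\lesssim\sum_{|k|\leqslant|m|+2}\|e^{\frac12\langle y\rangle}\partial^kU_c\|_{L^2}$, and the rate-$\frac12$ weighted bounds already suffice for the Sobolev step.
\end{itemize}
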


\begin{remark} \label{rkdemi}
Clearly, the proof below can be easily adapted to show that $| \partial^m U_c (y) | \lesssim_m e^{- 1^- |y|}$ where $1^-$ is any $\alpha \in (0 \, , 1)$.
\end{remark}

\begin{proof}
This proof relies on so-called Agmon arguments, in reference to \cite{Ag}. See \cite[Lemma 2.4]{KMR} for a variant of the proof of Agmon, that we adapt here. From \eqref{sysNUV2} we know that $(- \Delta +1)U_c = -N_cU_c$, where $N_c$ and $U_c$ are functions that belong to $H^s$ for all $s \geqslant 0$. It is well-known (see \cite[Chap. 4.3, Ex. 1]{Ev} for instance) that, in $\R^2$, the solution $f$ to $(- \Delta + 1)f=g$ is given by $f(x) = \int_{\R^2} \mathcal{K} (x-y) g(y) \, \text{d}y$ where $\mathcal{K}$ is a modified Bessel function of the second kind, which satisfies
\[ | \mathcal{K} (y) | \lesssim (1 + \left | \ln |y| \, \right | ) e^{- |y|} . \]
Hence
\[ U_c(x) = -\int_{\R^2} \mathcal{K} (x-y) N_c(y) U_c(y) \, \text{d}y. \]
Take $L>0$ and $M>0$, to be specified later. We shall be careful that $L$ does not depend on $M$. 
\begin{itemize}
	\item[$\bullet$] First, note that $e^{\frac{1}{2} |x|} \leqslant e^{\frac{1}{2} |x-y|} e^{\frac{1}{2} |y|}$. It follows that
	\begin{align*}
	& \min \left ( M \, , e^{\frac{1}{2} |x|} \right ) \left | \int_{|y|<L} \mathcal{K} (x-y) N_c(y) U_c(y) \, \text{d}y \right | \\
	& \lesssim e^{\frac{1}{2} |x|} \int_{|y|<L} (1+|\ln|x-y| \, | ) e^{-  |x-y|} \| N_c \|_{L^\infty} \| U_c \|_{L^\infty} \, \text{d}y \\
	& \lesssim \int_{|y|<L} e^{\frac{1}{2} |y|} e^{- \frac{1}{2} |x-y|} (1+|\ln |x-y| \, |) \, \text{d}y \\
	& \lesssim \int_{|y|<L} e^{\frac{L}{2}} e^{- \frac{1}{2} |x-y|} (1+ | \ln |x-y| \, |) \, \text{d}y \\
	& \lesssim e^{\frac{L}{2}} \int_{\R^2} e^{- \frac{1}{2} |z|} (1+ | \ln |z| \, | ) \, \text{d}z \\
	& \leqslant C_L < \infty .
	\end{align*}
	\item[$\bullet$] Second, note that $M \leqslant M e^{\frac{1}{2} |x-y|}$ thus $\min (M \, , e^{\frac{1}{2} |x|}) \leqslant e^{\frac{1}{2} |x-y|} \min \left ( M \, , e^{\frac{1}{2} |y|} \right )$. It follows that
	\begin{align*}
	& \min \left ( M \, , e^{\frac{1}{2} |x|} \right ) \left | \int_{|y| \geqslant L} \mathcal{K} (x-y) N_c(y) U_c(y) \, \text{d}y \right | \\
	& \lesssim \int_{|y| \geqslant L} e^{\frac{1}{2} |x-y|} \min \left ( M \, , e^{\frac{1}{2} |y|} \right ) (1+ | \ln |x-y| \, |) e^{- |x-y|} \left ( \sup\limits_{|z| \geqslant L} |N_c(z)| \right ) |U_c(y)| \, \text{d}y \\
	& \lesssim \left ( \sup\limits_{|z| \geqslant L} |N_c(z)| \right ) \int_{|y| \geqslant L} e^{- \frac{1}{2} |x-y|} (1+ | \ln |x-y| \, |) \left ( \sup\limits_{|z| \geqslant L} \left [ \min \left ( M \, , e^{\frac{1}{2} |z|} \right ) |U_c(z)| \right ] \right ) \, \text{d}y \\
	& \lesssim \left ( \sup\limits_{|z| \geqslant L} |N_c(z)| \right ) \left ( \sup\limits_{|z| \geqslant L} \left [ \min \left ( M \, , e^{\frac{1}{2} |z|} \right ) |U_c(z)| \right ] \right ) \int_{\R^2} e^{- \frac{1}{2} |z|} (1+| \ln |z| \, | ) \, \text{d}z \\
	& \lesssim \left ( \sup\limits_{|z| \geqslant L} |N_c(z)| \right ) \left ( \sup\limits_{z \in \R^2} \left [ \min \left ( M \, , e^{\frac{1}{2} |z|} \right ) |U_c(z)| \right ] \right ).
	\end{align*}
	Recall from Theorem \ref{thm1} that $||N+Q^2||_{L^\infty} \lesssim ||N+Q^2||_{H^2} \lesssim c^2$ via Sobolev embedding. It follows that
	\[ \sup\limits_{|z| \geqslant L} |N_c(z)| \leqslant \left ( \sup\limits_{|z| \geqslant L} Q^2 \right ) + ||N+Q^2||_{L^\infty} \lesssim e^{-L} + c^2 . \]
	Thus, taking $\tilde{c}_* \in (0 \, , c_* )$ small enough, $|c| < \tilde{c}_*$ and $L>0$ large enough (not depending on $c$ nor $M$),
	\[ \min \left ( M \, , e^{\frac{1}{2} |x|} \right ) \left | \int_{|y| \geqslant L} \mathcal{K} (x-y) N_c(y) U_c(y) \, \text{d}y \right | \leqslant \frac{1}{2} \sup\limits_{z \in \R^2} \left [ \min \left ( M \, , e^{\frac{1}{2} |z|} \right ) |U_c(z)| \right ] . \]
	We fix such an $L$ and recall that it does not depend on $c$ nor $M$. 
\end{itemize}
Combining the previous estimates, we find that, for all $x \in \R^2$,
\begin{align*}
\min \left ( M \, , e^{\frac{1}{2} |x|} \right ) |U_c(x)| \leqslant & \min \left ( M \, , e^{\frac{1}{2} |x|} \right ) \left | \int_{|y| \geqslant L} \mathcal{K} (x-y) N_c(y) U_c(y) \, \text{d}y \right | \\
& \qquad + \min \left ( M \, , e^{\frac{1}{2} |x|} \right ) \left | \int_{|y| < L} \mathcal{K} (x-y) N_c(y) U_c(y) \, \text{d}y \right | \\
\leqslant & \, \frac{1}{2} \sup\limits_{z \in \R^2} \left [ \min \left ( M \, , e^{\frac{1}{2} |z|} \right ) |U_c(z)| \right ] + C.
\end{align*}
Since $\sup\limits_{x \in \R^2} \left [ \min \left ( M \, , e^{\frac{1}{2} |x|} \right ) |U_c(x)| \right ] \leqslant M \| U_c \|_{L^\infty} < \infty$, taking the supremum over all $x \in \R^2$ in the inequality above leads to $\min \left ( M \, , e^{\frac{1}{2} |x|} \right ) |U_c(x)| \leqslant 2C$. Recall that the constant $C$ does not depend on $M$. Letting $M \to + \infty$, it follows that
\[ \forall x \in \R^2 , \quad |U_c (x)| \lesssim e^{- \frac{1}{2} |x|}. \]
Now, for the derivatives of $U_c$, take $j \in \{ 1 \, , 2 \}$. From \eqref{sysNUV2} we see that the equation satisfied by $\partial_{y_j} U_c$ is $(- \Delta + 1) ( \partial_{y_j} U_c ) = -N_c \partial_{y_j} U_c - U_c \partial_{y_j} N_c$. The only difference with the previous proof is the last term $U_c \partial_{y_j} N_c$, which does not contain $\partial_{y_j} U_c$ but which is already exponentially decreasing. Explicitly, on the one hand,
\begin{align*}
& \min \left ( M \, , e^{\frac{1}{2} |x|} \right ) \left | \int_{\R^2} \mathcal{K} (x-y) U_c (y) \partial_{y_j} N_c (y) \, \text{d}y \right | \\
& \lesssim e^{\frac{1}{2} |x|} \int_{\R^2} (1+|\ln|x-y| \, |) e^{- |x-y|} \underbrace{\left \| \partial_{y_j} N_c \right \|_{L^\infty}}_{\lesssim 1} e^{- \frac{1}{2} |y|} \, \text{d}y \\
& \lesssim \int_{\R^2} e^{\frac{1}{2} |y|} (1+|\ln |x-y| \, |) e^{- \frac{1}{2} |x-y|} e^{- \frac{1}{2} |y|} \, \text{d}y \\
& \lesssim \int_{\R^2} e^{- \frac{1}{2} |z|} (1+ | \ln |z| \, | ) \, \text{d}z \\
& \lesssim 1.
\end{align*}
And on the other hand, as before (splitting between $|y|<L$ and $|y| \geqslant L$ for a suitable $L$ that does not depend on $M$),
\[ \min \left ( M \, , e^{\frac{1}{2} |x|} \right ) \left | \int_{\R^2} \mathcal{K} (x-y) N_c(y) \partial_{y_j} U_c(y) \, \text{d}y \right | \leqslant \frac{1}{2} \sup\limits_{z \in \R^2} \left [ \min \left ( M \, , e^{\frac{1}{2} |z|} \right ) |\partial_{y_j} U_c(z)| \right ] + C \]
where the constant $C$ does not depend on $M$. It follows that, for all $x \in \R^2$,
\[ \min \left ( M \, , e^{\frac{1}{2} |x|} \right ) |\partial_{y_j} U_c (x)| \leqslant \frac{1}{2} \sup\limits_{x \in \R^2} \left [ \min \left ( M \, , e^{\frac{1}{2} |z|} \right ) |\partial_{y_j} U_c (z)| \right ] + C' \]
where the constant $C'$ does not depend on $M$. The end of the proof is the same as previously, and we obtain
\[ \forall x \in \R^2 , \quad | \partial_{y_j} U_c | \lesssim e^{- \frac{1}{2} |x|}. \]
We proceed similarly for higher derivatives: adapting the lines above to the equation $(- \Delta + 1) \partial_{y_j}^k U_c = - \partial_{y_j}^k (N_cU_c) = - \sum\limits_{\ell=0}^k \binom{k}{\ell} ( \partial_{y_j}^{\ell} N_c ) ( \partial_{y_j}^{k- \ell } U_c )$, we conclude by inductive reasoning (on $k$) that $| \partial_{y_j}^k U_c | \lesssim e^{- \frac{1}{2} |x|}$. Due to Leibnitz's product formula, the implicit constant contained in the inequality $\lesssim$ depends on $k$. The estimate for $\partial_{y_1}^{m_1} \partial_{y_2}^{m_2} U_c$ follows similar arguments.
\end{proof}

Contrary to $U_c$, the functions $N_c$ and $V_c$ do not exponentially decrease. The asymptotics stated below differ from the Schrödinger case and from the one-dimensional Zakharov solitary waves. The non-exponential decrease of $N_c$ and $V_c$ makes the construction of two-dimensional multi-solitons more challenging than in one dimension (see \cite{GR3}). In order to construct such multi-solitons, the more precise asymptotics below should be needed (see \cite{KMR} for a similar approach).

\begin{lemma} \label{lemDecrNV}
For all $|c| < \tilde{c}_*$, $m \in \N^2$ and $|y| \geqslant 4$,
\begin{align}
& | \partial^m N_c (y) | \lesssim_m   e^{- |y|} + \frac{c^2}{|y|^{|m|+2}} \label{decrN} \\
\text{and} \quad & | \partial^m V_{c,1} (y) | + | \partial^m V_{c,2} (y) | \lesssim_{m} \frac{c}{|y|^{|m|+2}} . \label{decrV}
\end{align}
Besides, if $c=ce_1$, the following expansion holds for all $K \geqslant 3$ and $|y| \geqslant 4$,
\begin{equation}
\begin{split}
& \left | N_c (y) + \frac{c^2}{4 \pi^2 \nu^2} \sum\limits_{n=0}^K \frac{1}{|z|^{2n+2}} \int_{\R^2} \left ( 2 - \frac{4(n+1) (z_1- \zeta_1)^2}{|z|^2} \right ) (2z \cdot \zeta - | \zeta |^2 )^n |U_c|^2 ( \nu \zeta_1 \, , \zeta_2 ) \, \text{d} \zeta \right | \\
& \lesssim \frac{c^2 K3^K}{|y|^{K+1}} + e^{- |y|} + c^2 C_{c,K} e^{- \frac{\nu^2}{8} |y|^{1/K}} 
\end{split}
\label{expanN}
\end{equation}
where $\nu = \sqrt{1-c^2}$, $z = (z_1 \, , z_2) = (\nu^{-1} y_1 \, , y_2)$ and $C_{c,K} = (2K+5)! (1+8 \nu^{-2})^{2K+3}$.
\end{lemma}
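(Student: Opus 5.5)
With $c=ce_1$, write $\rho:=|U_c|^2$. By Lemma \ref{lemAgmon}, every derivative of $\rho$ satisfies $|\partial^m\rho(y)|\lesssim_m e^{-|y|}$. Since $N_c=-\rho-S_c\rho$ and $V_{c,j}=T_{c,j}\rho$, the term $\rho$ already gives the $e^{-|y|}$ contribution. So the whole statement reduces to pointwise asymptotics for $S_c\rho$ and $T_{c,j}\rho$, and I would make these operators explicit as convolutions.

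\emph{Step 1: explicit kernels.} Here $\Delta_c=\nu^2\partial_{y_1}^2+\partial_{y_2}^2$ with $\nu=\sqrt{1-c^2}$. The change of variables $z=(\nu^{-1}y_1,y_2)$ turns $\Delta_c$ into the standard Laplacian. Hence the fundamental solution of $\Delta_c$ is $\Gamma_c(y)=\frac{1}{2\pi\nu}\ln|z|$, and
\[ S_c\rho=c^2\,\Gamma_c*\partial_{y_1}^2\rho=c^2\,(\partial_{y_1}^2\Gamma_c)*\rho,\qquad T_{c,j}\rho=c\,(\partial_{y_1}\Gamma_c)*\partial_{y_j}\rho . \]
These identities should be justified on the Fourier side: $\rho$ is Schwartz-like and the symbols in \eqref{Sc} are bounded. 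Moving derivatives onto the kernel, $\partial^m S_c\rho=c^2\,\Gamma_c*\partial^{m}\partial_{y_1}^2\rho$, and similarly for $T_{c,j}$.

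\emph{Step 2: decay bounds \eqref{decrN}--\eqref{decrV}.} For $|y|\geqslant4$ I would split the convolution $\int \Gamma_c(y-x)\,\partial^{m+2e_1}\rho(x)\,dx$ into $|x|<|y|/2$ and $|x|\geqslant|y|/2$. The far region is bounded by $e^{-|y|/2}$ times polynomial factors, which is harmless against $|y|^{-|m|-2}$. On the near region I Taylor expand $\Gamma_c(y-x)$ in $x$ to order $|m|+1$. The polynomial terms integrate against $\partial^{m+2e_1}\rho$ to zero, by integration by parts (vanishing moments). The remainder is $\lesssim |x|^{|m|+2}\,|\nabla^{|m|+2}\Gamma_c|(y-\theta x)\lesssim |x|^{|m|+2}|y|^{-|m|-2}$, and $\int|x|^{|m|+2}e^{-|x|}\,dx<\infty$. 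This gives $c^2|y|^{-|m|-2}$ for $N_c$; the prefactor $\nu^{-1}$ is bounded for $|c|\le 1/2$. The same argument applies to $V$: there the kernel is $\partial_{y_1}\Gamma_c$ and the density is $\partial_{y_j}\rho$, so the total number of derivatives is again $|m|+2$, giving $c|y|^{-|m|-2}$.

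\emph{Step 3: expansion \eqref{expanN}.} I would write $S_c\rho(y)=c^2\int \partial_{1}^2\Gamma_c(y-x)\rho(x)\,dx$ and substitute $x=(\nu\zeta_1,\zeta_2)$. In the $z$-variables one has $\partial_{y_1}^2=\nu^{-2}\partial_{z_1}^2$ and
\[ \partial_{z_1}^2\ln|z-\zeta|=\frac{1}{|z-\zeta|^2}\left(1-\frac{2(z_1-\zeta_1)^2}{|z-\zeta|^2}\right). \]
Next I expand $|z-\zeta|^{-2}=|z|^{-2}\left(1-\frac{2z\cdot\zeta-|\zeta|^2}{|z|^2}\right)^{-1}$ as a geometric series $\sum_n (2z\cdot\zeta-|\zeta|^2)^n|z|^{-2n}$. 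The $|z-\zeta|^{-4}$ term gives a derivative series with coefficients $(n+1)$. This reproduces the stated sum, including the constant $\frac{c^2}{4\pi^2\nu^2}$ and the factor $\frac{1}{2\pi}$ coming from the kernel; the constant must be checked carefully.

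\emph{Step 4: error control in the expansion (the main obstacle).} The series converges only when $|2z\cdot\zeta-|\zeta|^2|<|z|^2/2$, say for $|\zeta|\le|z|^{1/K}$-type regions. So I would split the integral at $|\zeta|\sim |y|^{1/K}$. In the inner region the tail of the series beyond $n=K$ is bounded by $\sum_{n>K}(3|\zeta|/|z|)^{n}$-type quantities, producing $K3^K|y|^{-K-1}$ after integrating against $\rho$. In the outer region both the exact kernel and the truncated polynomial kernel must be bounded crudely against $\rho\lesssim e^{-|x|}$. Integrating the polynomial factors $|\zeta|^{2n}$, $n\le K+1$, against $e^{-\nu|\zeta|/\ldots}$ outside a ball of radius $|y|^{1/K}$ yields the factorial constant $C_{c,K}$ and the factor $e^{-\frac{\nu^2}{8}|y|^{1/K}}$. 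The local singularity of the exact kernel near $\zeta=z$ is absorbed into $e^{-|y|}$. Tracking these constants explicitly is the delicate bookkeeping step; I would do it by bounding $\int_{|\zeta|>R}|\zeta|^{p}e^{-a|\zeta|}\,d\zeta\le C\,p!\,a^{-p-2}e^{-aR/2}$.
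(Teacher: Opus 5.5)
Your plan is correct in substance. For the expansion \eqref{expanN} it is essentially the paper's argument. Both use the log kernel in the variables $z=(\nu^{-1}y_1,y_2)$, two integrations by parts in $\zeta_1$, and finite geometric expansions of $|z-\zeta|^{-2}$ and $|z-\zeta|^{-4}$ on the ball $|\zeta|<|z|^{1/K}$. The tails of the full-space moment integrals then produce the factorial constant $C_{c,K}$.

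For \eqref{decrN}--\eqref{decrV} your route differs mildly. You leave the derivatives on $\rho$, cut at $|x|=|y|/2$, and combine Taylor's formula for the kernel with the vanishing moments of $\partial^{m+2e_1}\rho$. The paper instead reuses the cut $|\zeta|=|z|^{1/3}$ and integrates by parts $|m|+2$ times inside the ball. There it uses $\bigl|\partial_\zeta^{m+2e_1}\bigl(\ln|z-\zeta|^2\bigr)\bigr|\lesssim_m|z-\zeta|^{-|m|-2}$, and it controls the boundary terms on $|\zeta|=|z|^{1/3}$ by the decay of $\rho$. The two are dual forms of the same cancellation; yours avoids boundary terms and stretched exponentials. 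One detail: only the full-space moments vanish. On $|x|<|y|/2$ they are exponentially small, which suffices.

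Two points in Steps 3--4 need correcting.

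\emph{The singular kernel and the remainders.} Once both derivatives sit on the kernel, the outer region cannot be bounded crudely near $\zeta=z$, because $\partial_{\zeta_1}^2\ln|z-\zeta|^2$ is not locally integrable there. Your claim that this piece is $O(e^{-|y|})$ is right, but it needs the principal-value-plus-Dirac structure of that kernel. Since the kernel has mean zero on circles, the disc $|\zeta-z|<1$ costs only $\sup|\nabla\rho|$ there, plus $\rho(y)$. Alternatively, do as the paper does: keep $\ln|z-\zeta|^2$ paired with $(\partial_{y_1}^2\rho)(\nu\zeta_1,\zeta_2)$ on $|\zeta|\geqslant|z|^{1/K}$, and integrate by parts only on the ball. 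The resulting boundary terms on $|\zeta|=|z|^{1/K}$ are stretched-exponentially small. Also bound the truncation error by the exact finite remainders (e.g.\ $w^{K+1}/|z-\zeta|^2$ with $w=(2z\cdot\zeta-|\zeta|^2)/|z|^2$, and its analogue for the fourth power) rather than by tail series. The reason is that $3|\zeta|/|z|$ can exceed $1$ when $|z|$ is close to $4$.

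\emph{The constant.} With your kernel $\frac{1}{2\pi\nu}\ln|z|$, which is the correct one, Step 3 gives $S_c\rho=\frac{c^2}{4\pi\nu^2}\int\bigl(\frac{2}{|z-\zeta|^2}-\frac{4(z_1-\zeta_1)^2}{|z-\zeta|^4}\bigr)\rho(\nu\zeta_1,\zeta_2)\,\text{d}\zeta$. The prefactor is therefore $\frac{c^2}{4\pi\nu^2}$, not $\frac{c^2}{4\pi^2\nu^2}$. The $\pi^2$ in the statement comes from a slip in the paper's proof: $\frac{c^2}{4\pi}$ turns into $\frac{c^2}{4\pi^2}$ when the integral is split into $I_1+I_2+I_\star$. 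The fundamental solution is also written there with the wrong sign, which a second sign slip at the last step compensates, so the sign in \eqref{expanN} is right. The $n=0$ term has size $c^2|y|^{-2}$ away from the lines $|z_1|=|z_2|$, so the error terms cannot absorb a factor $\pi$. You should therefore not claim that your computation recovers the stated constant.
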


\begin{remark} \label{rkdemi2}
As discussed previously in Remark \ref{rkdemi}, the proof below can be adapted to show that $| \partial^m N_c | \lesssim_m e^{- 2^- |y|} + \frac{c^2}{|y|^{|m|+2}}$. 
\end{remark}

\begin{proof}
We recall that $N_c = -|U_c|^2-S_c(|U_c|^2)$ and we take $c = ce_1$ (without loss of generality). The main part of the following proof is devoted to the analysis of $S_c(h)$ where $h = |U_c|^2$ satisfies $| \partial_{y_j}^k h(y)| \lesssim e^{- \frac{1}{2} |y|}$. Recall that $S_c = \Delta_c^{-1} (c \cdot \nabla )^2 = c^2 ((1-c^2) \partial_{y_1}^2 + \partial_{y_2}^2)^{-1} \partial_{y_1}^2$. The elementary solution to $\Delta_c = (1-c^2) \partial_{y_1}^2 + \partial_{y_2}^2$ in two dimensions is $- \frac{1}{4 \pi \sqrt{1-c^2}} \ln \left ( \frac{y_1^2}{1-c^2} + y_2^2 \right )$, thus
\[ S_c h(z) = - \frac{c^2}{4 \pi \sqrt{1-c^2}} \int_{\R^2} \ln \left ( \frac{(z_1-y_1)^2}{1-c^2} + (z_2-y_2)^2 \right ) \partial_{y_1}^2 h(y_1 \, , y_2) \, \text{d}y . \]
Changing variables, we obtain
\[ S_c h (\nu z_1 \, , z_2 ) = - \frac{c^2}{4 \pi} \int_{\R^2} \ln \left ( |z - \zeta |^2 \right ) \partial_{y_1}^2 h( \nu \zeta_1 \, , \zeta_2 ) \, \text{d} \zeta \]
where $\nu = \sqrt{1-c^2}$. Note that $\nu^{-1} \lesssim 1$. We then split the integral: $S_ch( \nu z_1 \, , z_2 ) = - \frac{c^2}{4 \pi^2} (I_1 + I_2 + I_\star )$ where
\begin{align*}
& I_1 = \int_{\substack{|\zeta| \geqslant |z|^{1/K} \\ |\zeta-z| \leqslant \frac{1}{2} |z|}} \ln \left ( |z - \zeta |^2 \right ) \partial_{y_1}^2 h( \nu \zeta_1 \, , \zeta_2 ) \, \text{d} \zeta , \\
& I_2 = \int_{\substack{|\zeta| \geqslant |z|^{1/K} \\ |\zeta-z| > \frac{1}{2} |z|}} \ln \left ( |z - \zeta |^2 \right ) \partial_{y_1}^2 h( \nu \zeta_1 \, , \zeta_2 ) \, \text{d} \zeta \\
\text{and} \quad & I_\star = \int_{|\zeta|<|z|^{1/K}} \ln \left ( |z - \zeta |^2 \right ) \partial_{y_1}^2 h( \nu \zeta_1 \, , \zeta_2 ) \, \text{d} \zeta .
\end{align*}
We begin with $I_1$. For $|\zeta-z| \leqslant \frac{1}{2} |z|$, $|\zeta| \geqslant \frac{1}{2} |z|$ thus $|\partial_{y_1}^2 h( \nu \zeta_1 \, , \zeta_2 ) | \lesssim e^{- \frac{\nu^2}{2} | \zeta |^2} \lesssim e^{- \frac{\nu^2}{4} |z|}$. Hence
\begin{align}
|I_1| \lesssim & \, e^{- \frac{\nu^2}{4} |z|} \int_{| \zeta - z | \leqslant \frac{1}{2} |z|} \left | \ln \left ( | z - \zeta |^2 \right ) \right | \text{d} \zeta = e^{- \frac{\nu^2}{4} |z|} \int_{| \xi | \leqslant \frac{1}{2}} \left | \ln \left ( |z \xi |^2 \right ) \right | |z|^2 \, \text{d} \xi \nonumber \\
\lesssim & \, |z|^2 e^{- \frac{\nu^2}{4} |z|} \left ( 2 \ln |z| \times \frac{\pi}{4} + \int_{ | \xi | \leqslant \frac{1}{2}} | \ln (| \xi |^2) | \, \text{d} \xi \right ) \lesssim |z|^2 ( \ln |z| + 1 ) e^{- \frac{\nu^2}{4} |z|} \lesssim e^{- \frac{\nu^2}{8} |z|} . \label{estI1}
\end{align}
Now we estimate $I_2$. For $| \zeta - z | > \frac{1}{2} |z| \geqslant 1$,
\[ | \ln ( |z - \zeta |^2 ) | = \ln ( |z - \zeta |^2 ) \leqslant K |z- \zeta |^{1/K} \lesssim K \left ( |z|^{1/K} + | \zeta |^{1/K} \right ) \lesssim K \left ( |z|^{1/K} + 1 + | \zeta | \right ) . \]
Besides, for $| \zeta | \geqslant |z|^{1/K}$, $| \partial_{y_1}^2 h ( \nu \zeta_1 \, , \zeta_2 ) | \lesssim e^{- \frac{\nu^2}{2} | \zeta |} \lesssim e^{- \frac{\nu^2}{4} |z|^{1/K}} e^{- \frac{\nu^2}{4} | \zeta |}$. Hence
\begin{equation} 
|I_2| \lesssim e^{- \frac{\nu^2}{4} |z|^{1/K}} \int_{\R^2} K \left ( |z|^{1/K} + 1 + | \zeta | \right ) e^{- \frac{\nu^2}{4} | \zeta |} \, \text{d} \zeta \lesssim K \left ( |z|^{1/K} + 1 \right ) e^{- \frac{\nu^2}{4} |z|^{1/K}} \lesssim K e^{- \frac{\nu^2}{8} |z|^{1/K}} . \label{estI2} 
\end{equation}
At last, to estimate $I_\star$ we integrate by parts twice in the variable $\zeta_1$. We compute
\begin{align*}
& \partial_{\zeta_1} \left ( \ln ( |z- \zeta |^2 ) \right ) = - \frac{2 (z_1 - \zeta_1 )}{|z - \zeta |^2} \\
\text{and} \quad & \partial_{\zeta_1}^2 \left ( \ln ( |z - \zeta |^2 ) \right ) = \frac{2}{|z - \zeta |^2} - \frac{4 ( z_1 - \zeta_1 )^2}{| z - \zeta |^4} .
\end{align*}
This leads to $I_\star = I_3 + I_4 + I_5$ where
\begin{align*}
& I_3 = \frac{1}{\nu} \int_{| \zeta_2 | < |z|^{1/K}} \left [ \ln \left ( |z - \zeta |^2 \right ) \partial_{y_1} h ( \nu \zeta_1 \, , \zeta_2 ) \right ]_{\zeta_1 = - \sqrt{|z|^{2/K} - \zeta_2^2}}^{\zeta_1 = \sqrt{|z|^{2/K} - \zeta_2^2}} \text{d}\zeta_2 , \\
& I_4 = - \frac{1}{\nu^2} \int_{| \zeta_2 | < |z|^{1/K}} \left [ \frac{-2 (z_1 - \zeta_1 )}{|z - \zeta |^2} \, h( \nu \zeta_1 \, , \zeta_2 ) \right ]_{\zeta_1 = - \sqrt{|z|^{2/K} - \zeta_2^2}}^{\zeta_1 = \sqrt{|z|^{2/K} - \zeta_2^2}} \text{d}\zeta_2 \\
\text{and} \quad & I_5 = \frac{1}{\nu^2} \int_{| \zeta | < |z|^{1/K}} \left ( \frac{2}{|z - \zeta |^2} - \frac{4 ( z_1 - \zeta_1 )^2}{| z - \zeta |^4} \right ) h ( \nu \zeta_1 \, , \zeta_2 ) \, \text{d} \zeta .
\end{align*}
We first estimate $I_3$. For $| \zeta_2 | < |z|^{1/K}$ and $\zeta_1 = \pm \sqrt{|z|^{2/K} - \zeta_2^2}$, we have $| \zeta | = |z|^{1/K} \leqslant \frac{1}{2} |z|$ thus $|z - \zeta | \geqslant \frac{1}{2} |z| \geqslant 1$ thus $| \ln ( |z - \zeta |^2 ) | \lesssim K ( |z|^{1/K} + 1 + | \zeta | ) \lesssim K ( |z|^{1/K} + 1)$. Besides, for such $\zeta$, $| \partial_{y_1}^2 h ( \nu \zeta_1 \, , \zeta_2 ) | \lesssim e^{- \frac{\nu^2}{2} | \zeta |} \lesssim e^{- \frac{\nu^2}{2} |z|^{1/K}}$. Hence
\begin{equation} 
|I_3| \lesssim \frac{1}{\nu} \, e^{- \frac{\nu^2}{2} |z|^{1/K}} \int_{| \zeta_2 | < |z|^{1/K}} K ( |z|^{1/K} + 1 ) \, \text{d} \zeta_2 \lesssim K \left ( |z|^{1/K} +1 \right ) |z|^{1/K} e^{- \frac{\nu^2}{2} |z|^{1/K}} \lesssim K e^{- \frac{\nu^2}{4} |z|^{1/K}}. 
\label{estI3}
\end{equation}
Now we estimate $I_4$. Again, for $| \zeta_2 | < |z|^{1/K}$ and $\zeta_1 = \pm \sqrt{|z|^{2/K} - \zeta_2^2}$, we have $1 \leqslant \frac{1}{2} |z| \leqslant |z - \zeta | \leqslant |z| + | \zeta | \lesssim |z|$ thus
\begin{equation} 
|I_4| \lesssim \frac{1}{\nu^2} \int_{| \zeta_2 | < |z|^{1/K}} \frac{|z|}{|z|^2} \, e^{- \frac{\nu^2}{2} |z|^{1/K}} \text{d} \zeta_2 \lesssim \frac{1}{\nu^2 |z|} \, e^{- \frac{\nu^2}{2} |z|^{1/K}} \times |z|^{1/K} \lesssim e^{- \frac{\nu^2}{2} |z|^{1/K}}. 
\label{estI4}
\end{equation}
At last we deal with $I_5$ which describes the main asymptotic behavior of $N$. For $| \zeta | < |z|^{1/K} \leqslant \frac{|z|}{2}$, write 
\[ \frac{1}{|z - \zeta |^2} = \frac{1}{|z|^2} \sum\limits_{n=0}^K \frac{\left ( 2 z \cdot \zeta - | \zeta |^2 \right )^n}{|z|^{2n}} + \frac{1}{|z- \zeta |^2} \frac{(2z \cdot \zeta - | \zeta |^2 )^{K+1}}{|z|^{2(K+1)}} \]
which leads to
\begin{equation} 
\frac{1}{\nu^2} \int_{| \zeta | < |z|^{1/K}} \frac{2}{|z - \zeta |^2} h( \nu \zeta_1 \, , \zeta_2 ) \, \text{d} \zeta = \sum\limits_{n=0}^K (\mathbf{J}_n^{\text{main}} + \mathbf{J}_n^{\text{queue}}) + \mathbf{R}_K
\label{JKR1}
\end{equation}
where
\begin{align*}
& \mathbf{J}_n^{\text{main}} = \frac{2}{\nu^2 |z|^{2n+2}} \int_{\R^2} (2z \cdot \zeta - | \zeta |^2 )^n h( \nu \zeta_1 \, , \zeta_2 ) \, \text{d} \zeta , \\
& \mathbf{J}_n^{\text{queue}} = - \frac{2}{\nu^2 |z|^{2n+2}} \int_{| \zeta  | \geqslant |z|^{1/K}} (2z \cdot \zeta - | \zeta |^2 )^n h( \nu \zeta_1 \, , \zeta_2 ) \, \text{d} \zeta \\
\text{and} \quad & \mathbf{R}_K = \frac{2}{\nu^2 |z|^{2K+2}} \int_{ | \zeta | < |z|^{1/K}} \frac{(2 z \cdot \zeta - | \zeta |^2 )^{K+1}}{|z - \zeta |^2} \, h( \nu \zeta_1 \, , \zeta_2 ) \, \text{d} \zeta .
\end{align*}
The term $\mathbf{J}_n^{\text{main}}$ is a rational fraction of $z$, whose coefficients are combinations of pseudo-moments of $h = |U|^2$. Now we estimate the queue terms. First, recalling that $\int_0^{+ \infty} s^m e^{- \gamma s} \text{d}s = \frac{m!}{\gamma^{m+1}}$ for $m \in \N$ and $\gamma > 0$, we compute
\begin{align}
| \mathbf{J}_n^{\text{queue}} | \lesssim & \, \frac{1}{|z|^{2n+2}} \int_{ | \zeta | \geqslant |z|^{1/K}} | \zeta |^n (2|z| + | \zeta |)^n e^{- \frac{\nu^2}{2} | \zeta |} \text{d} \zeta \nonumber \\
\lesssim & \, \frac{1}{|z|^{2n+2}} e^{- \frac{\nu^2}{4} |z|^{1/K}} \int_{| \zeta | \geqslant |z|^{1/K}} n4^n ( | \zeta |^n | z |^n + | \zeta |^{2n} ) e^{- \frac{\nu^2}{4} | \zeta |} \text{d} \zeta \nonumber \\
\lesssim & \, \frac{1}{|z|^{2n+2}} e^{- \frac{\nu^2}{4} |z|^{1/K}} \left ( |z|^n \int_{\R^2} | \zeta |^n e^{- \frac{\nu^2}{4} | \zeta |} \text{d} \zeta + \int_{\R^2} | \zeta |^{2n} e^{- \frac{\nu^2}{4} | \zeta |} \text{d} \zeta \right ) \nonumber \\
\lesssim & \, \frac{1}{|z|^{2n+2}} n (2n)! \left ( \frac{8}{\nu^2} \right )^{2n+1} |z|^n e^{- \frac{\nu^2}{4} |z|^{1/K}} \nonumber \\
\lesssim & \, \frac{1}{|z|^{n+2}} (2n+1)! \left ( 1+8 \nu^{-2} \right )^{2n+1} e^{- \frac{\nu^2}{4} |z|^{1/K}} . \label{Jq1}
\end{align}
For the term $\mathbf{R}_K$, we recall that $||h||_{L^\infty} \lesssim 1$; we get
\begin{align}
| \mathbf{R}_K | \lesssim & \, \frac{1}{|z|^{2K+2}} \int_{ | \zeta | < |z|^{1/K}} \frac{1}{\left ( |z| - \frac{|z|}{2} \right )^2} \left ( 2 |z| \times |z|^{1/K} + |z|^{2/K} \right )^{K+1} \, \text{d} \zeta \nonumber \\
\lesssim & \, \frac{1}{|z|^{2K+4}} \left ( 3 |z|^{1+ \frac{1}{K}} \right )^{K+1} |z|^{2/K} \nonumber \\
\lesssim & \, \frac{3^K}{|z|^{K+1}}. \label{R1}
\end{align}
We proceed similarly to estimate the behavior of the second term in $I_5$. For $| \zeta | < |z|^{1/K} \leqslant \frac{|z|}{2}$, write
\[ \frac{1}{| z - \zeta |^4} = \frac{1}{|z|^4} \sum\limits_{n=0}^K \frac{n+1}{|z|^{2n}} \left ( 2z \cdot \zeta - | \zeta |^2 \right )^n + \frac{1}{|z - \zeta |^4} \frac{\left ( 2 z \cdot \zeta - | \zeta |^2 \right )^{K+1} \left ( K+2 - (K+1) \frac{2 z \cdot \zeta - | \zeta |^2}{|z|^2} \right )}{|z - \zeta |^4} \]
which leads to
\begin{equation} 
- \frac{1}{\nu^2} \int_{| \zeta | < |z|^{1/K}} \frac{4(z_1 - \zeta_1)^2}{|z - \zeta |^4} h( \nu \zeta_1 \, , \zeta_2 ) \, \text{d} \zeta = \sum\limits_{n=0}^K (\tilde{\mathbf{J}}_n^{\text{main}} + \tilde{\mathbf{J}}_n^{\text{queue}}) + \tilde{\mathbf{R}}_K
\label{JKR2}
\end{equation}
where
\begin{align*}
& \tilde{\mathbf{J}}_n^{\text{main}} = - \frac{4(n+1)}{\nu^2 |z|^{2n+4}} \int_{\R^2} (z_1 - \zeta_1)^2 (2z \cdot \zeta - | \zeta |^2 )^n h( \nu \zeta_1 \, , \zeta_2 ) \, \text{d} \zeta , \\
& \tilde{\mathbf{J}}_n^{\text{queue}} = \frac{4(n+1)}{\nu^2 |z|^{2n+4}} \int_{| \zeta  | \geqslant |z|^{1/K}} (z_1 - \zeta_1)^2 (2z \cdot \zeta - | \zeta |^2 )^n h( \nu \zeta_1 \, , \zeta_2 ) \, \text{d} \zeta \\
\text{and} \quad & \tilde{\mathbf{R}}_K = - \frac{4}{\nu^2 |z|^{2K+2}} \int_{ | \zeta | < |z|^{1/K}} \frac{(z_1 - \zeta_1)^2 (2 z \cdot \zeta - | \zeta |^2 )^{K+1} \left ( K+2 - (K+1) \frac{2 z \cdot \zeta - | \zeta |^2}{|z|^2} \right )}{|z - \zeta |^4} \, h( \nu \zeta_1 \, , \zeta_2 ) \, \text{d} \zeta .
\end{align*}
Similarly to the obtention of \eqref{Jq1} and \eqref{R1}, we find
\begin{align}
& | \tilde{\mathbf{J}}_n^{\text{queue}} | \lesssim \frac{1}{|z|^{n+2}} (2n+4)! (1+8 \nu^{-2} )^{2n+3} e^{- \frac{\nu^2}{4} |z|^{1/K}} \label{Jq2} \\
\text{and} \quad & | \tilde{\mathbf{R}}_K | \lesssim \frac{K3^K}{|z|^{K+1}} . \label{R2}
\end{align}
Combining \eqref{JKR1}, \eqref{Jq1}, \eqref{R1}, \eqref{JKR2}, \eqref{Jq2} and \eqref{R2}, it holds that
\begin{equation} 
\left | I_5 - \sum\limits_{n=0}^K \left ( \mathbf{J}_n^{\text{main}} + \tilde{\mathbf{J}}_n^{\text{main}} \right ) \right | \lesssim \frac{K3^K}{|z|^{K+1}} + (2K+5)! (1+8 \nu^{-2} )^{2K+3} e^{- \frac{\nu^2}{4} |z|^{1/K}} . 
\label{estI5}
\end{equation}
Gathering \eqref{estI1}, \eqref{estI2}, \eqref{estI3}, \eqref{estI4} and \eqref{estI5}, we find that, for all $y \in \R^2$ such that $|y| \geqslant 4$, setting $z_1 = \frac{y_1}{\nu}$ and $z_2 = y_2$ (note that $| y | \leqslant |z| \leqslant \nu^{-1} |y|$),
\begin{align*}
& \left | S_c h (y_1 \, , y_2) + \frac{c^2}{4 \pi^2} \sum\limits_{n=0}^K ( \mathbf{J}_n^{\text{main}} + \tilde{\mathbf{J}}_n^{\text{main}} ) ( \nu^{-1} y_1 \, , y_2 ) \right | \\
& = \left | S_c h \left ( \nu z_1 \, , z_2 \right ) + \frac{c^2}{4 \pi^2} \sum\limits_{n=0}^K (\mathbf{J}_n^{\text{main}} + \tilde{\mathbf{J}}_n^{\text{main}} ) (z_1 \, , z_2) \right | \\
& \lesssim c^2 \left ( \frac{K3^K}{|z|^{K+1}} + (2K+5)! (1+8 \nu^{-2} )^{2K+3} e^{- \frac{\nu^2}{8} |z|^{1/K}} \right ) \\
& \lesssim \frac{c^2 K 3^K}{|y|^{K+1}} + c^2 (2K+5)! (1+8 \nu^{-2})^{2K+3} e^{- \frac{\nu^2}{8} |y|^{1/K}} .
\end{align*}
Recalling that $N_c = -|U_c|^2 - S_c (|U_c|^2)$ and $|U_c|^2 (y) \lesssim e^{- |y|}$, we obtain the desired expansion \eqref{expanN}. 

In order to get the decrease estimates \eqref{decrN} for $N$ and its derivatives, we follow the same proof as above but we do not need to analyse precisely the contributions of $I_5$. We take $K=3$ for example. We simply write that, for $|z| \geqslant 4$ and $| \zeta  | < |z|^{1/3} \leqslant \frac{|z|}{2}$, we have $|z - \zeta | \geqslant \frac{|z|}{2}$ thus
\[ |I_5| \lesssim  \frac{1}{\nu^2} \int_{| \zeta | < |z|^{1/3}} \left ( \frac{1}{|z|^2} + \frac{|z|^2}{|z|^4} \right ) |h( \nu \zeta_1 \, , \zeta_2 )| \, \text{d} \zeta \lesssim \frac{1}{|z|^2} \int_{\R^2} |h| \lesssim \frac{1}{|z|^2} . \]
This estimate, combined with the previous ones \eqref{estI1}, \eqref{estI2}, \eqref{estI3} and \eqref{estI4}, leads to 
\[ |N_c(y)| \lesssim e^{-|y|} + c^2 \left ( e^{- \frac{\nu^2}{8} |z|^{1/3}} + \frac{1}{|z|^2} \right ) \lesssim e^{-|y|} + \frac{c^2}{|y|^2}. \]
The estimates for higher derivatives are obtained similarly; we write that
\[ \partial^m S_c h = S_c \left ( \partial^m h \right ) . \]
Now we simply need to differentiate the function $\ln \left ( |z - \zeta |^2 \right )$ more times (with regards to $\zeta_1$ and/or $\zeta_2$) and to integrate by parts as many times. For instance, 
\[ \partial_{\zeta_1}^2 \partial_{\zeta_2} \left ( \ln | z - \zeta |^2 \right ) = 4 \frac{z_2- \zeta_2}{|z- \zeta |^4} -16 \frac{(z_1- \zeta_1 )^2 ( z_2 - \zeta_2)}{|z - \zeta |^6} . \]
We can easily see that, for any $m \in \N^2$,
\[ \left | \partial^m \left ( \ln |z - \zeta |^2 \right ) \right | \lesssim_{m} \frac{1}{|z - \zeta |^{|m|+2}} . \]
Following the same steps as above to control the derivatives of $N_c$, we deduce the estimate \eqref{decrN} for any $m \in \N^2$. The estimates on $V$ are obtained similarly since, recalling \eqref{sysNUV2}, $V_c = \left ( \begin{array}{c} - c \Delta_c^{-1} \partial_{y_1}^2 (|U_c|^2) \\ - c \Delta_c^{-1} \partial_{y_1} \partial_{y_2} (|U_c|^2) \end{array} \right )$. 
\end{proof}

\begin{remark} \label{rkexpanV}
Adapting the proof above, we perfectly can compute expansions for the derivatives of $N$, or for $V$ or its derivatives, similar to the expansion \eqref{expanN}. We do not pursue such computations here. 
\end{remark}

\section*{Appendix}
Here we prove the second point of Lemma \ref{lemL}. Let $f_2 \in H_{oe}^2$. We aim to prove that there exists $g_2 = L_-^{-1} f_2$ such that $L_- g_2 = f_2$; and to prove that $||g_2||_{H^2} \lesssim ||f_2||_{L^2}$. We adapt the arguments in \cite[Propositions 4.1 and 4.5, Lemma 4.6]{GM}. Recall, from \cite[Eq. (1.24)]{MaRa} for example, that
\begin{equation} 
\forall w \in H^1 ( \R^2 \, , \R ) \, \, \text{such that} \, \, w \perp \rho , \partial_{y_1} Q , \partial_{y_2} Q , \quad \langle L_- w \, , w \rangle \gtrsim ||w||_{H^1}^2 
\label{coer1}
\end{equation}
where $\langle w_1 \, , w_2 \rangle = \int_{\R^2} w_1 w_2$ and $\rho \in H^2 ( \R^2 )$ is a radial function such that $L_+ \rho = \frac{|y|^2Q}{4}$. The orthogonality assumptions above concern the scalar product $\langle \cdot , \cdot \rangle$. In our situation, $f_2$ is $y_1$-odd and $y_2$-even, thus $f_2 \perp \rho , \partial_{y_2}Q$. However $f_2 \not\perp \partial_{y_1} Q$ \textit{a priori}. We introduce 
\[ \mathbf{M} = \left \{ g \in H^1 ( \R^2 ) \, \, \text{such that $g$ is $y_1$-odd and $y_2$-even, and $g \perp \partial_{y_1}Q$} \right \} . \]
We write $f_2 = f_2^\perp + \alpha \partial_{y_1} Q$ with $\alpha = \frac{\langle f_2 , \partial_{y_1} Q \rangle}{|| \partial_{y_1} Q ||_{L^2}^2}$. Note that $\partial_{y_1} Q \perp \rho , \partial_{y_2} Q$ for the same reasons as $f_2$. Thanks to the choice of $\alpha$, we have $f_2^\perp \in \mathbf{M}$. The expression of $\alpha$ also gives 
\begin{align}
& | \alpha | \lesssim | \langle f_2 \, , \partial_{y_1} Q \rangle | \lesssim ||f_2||_{L^2} \label{alpha} \\
\text{hence} \quad & ||f_2^\perp ||_{L^2} \lesssim ||f_2||_{L^2} + | \alpha | \lesssim ||f_2||_{L^2} . \label{f2perp}
\end{align}
Let $\Pi_\mathbf{M}$ be the orthogonal projection on $\mathbf{M}$ and $L_{- \mathbf{M}} = \Pi_\mathbf{M} \circ L_- \, : \, \mathbf{M} \to \mathbf{M}$. On the Hilbert space $\mathbf{M}$ we use the norm $|| \cdot ||_{H^1}$.
\begin{itemize}
	\item[$\bullet$] The bilinear form $(g^\perp \, , w) \in \mathbf{M} \times \mathbf{M} \mapsto \langle L_{- \mathbf{M}} g^\perp \, , w \rangle \in \R$ is continuous. Indeed, for any $g^\perp , w \in \mathbf{M}$,
	\begin{align*}
	\langle L_{- \mathbf{M}} g^\perp \, , w \rangle =& \, \langle \Pi_{\mathbf{M}} L_- g^\perp \, , w \rangle = \langle L_- g^\perp \, , \Pi_{\mathbf{M}} w \rangle = \langle L_- g^\perp \, , w \rangle \\
	=& \, \langle \nabla g^\perp \, , \nabla w \rangle + \langle (1-Q^2) g^\perp \, , w \rangle \\
	\text{thus} \quad \left | \langle L_{- \mathbf{M}} g^\perp \, , w \rangle \right | \lesssim & \, ||g^\perp ||_{H^1} ||w||_{H^1} .
	\end{align*}
	Moereover, this bilinear form is coercive. Indeed, for any $w \in \mathbf{M}$, it holds that $w \perp \rho , \partial_{y_1} Q , \partial_{y_2} Q$ thus
	\begin{equation}
	\langle L_{- \mathbf{M}} w \, , w \rangle = \langle \Pi_{\mathbf{M}} L_- w \, , w \rangle = \langle L_- w \, , \Pi_{\mathbf{M}} w \rangle = \langle L_- w \, , w \rangle \geqslant ||w||_{H^1}^2 
	\label{coer2}
	\end{equation}
	thanks to \eqref{coer1}. 
	\item[$\bullet$] The linear form $w \in \mathbf{M} \mapsto \langle f_2^\perp \, , w \rangle \in \R$ is continuous. Indeed, for any $w \in \mathbf{M}$, $| \langle f_2^\perp \, , w \rangle | \leqslant ||f_2^\perp||_{L^2} ||w||_{H^1}$.
\end{itemize}
Henceforth, the Lax-Milgram theorem gives the existence of a unique $g^\perp \in \mathbf{M}$ such that
\begin{align*}
& \forall w \in \mathbf{M} , \quad \langle L_{- \mathbf{M}} g^\perp \, , w \rangle = \langle f_2^\perp \, , w \rangle \\
\text{\textit{i.e.}} \quad & L_{- \mathbf{M}} g^\perp = f_2^\perp \\
\text{\textit{i.e.}} \quad & \Pi_{\mathbf{M}} (L_- g^\perp ) = g_2^\perp \\
\text{\textit{i.e.}} \quad & L_- g^\perp = f_2^\perp + \beta \partial_{y_1} Q \quad \text{for some $\beta \in \R$.}
\end{align*}
Using the coercivity property \eqref{coer2} that holds for any function in $\mathbf{M}$ (in particular $g^\perp$), we have
\begin{align}
& ||g^\perp ||_{H^1}^2 \lesssim \langle L_{- \mathbf{M}} g^\perp \, , g^\perp \rangle = \langle f_2^\perp \, , g^\perp \rangle \leqslant ||f_2^\perp ||_{L^2} ||g^\perp||_{H^1} \nonumber \\
\text{thus} \quad & ||g^\perp ||_{H^1} \lessim ||f_2^\perp ||_{L^2} \lesssim ||f_2||_{L^2} \quad \text{thanks to \eqref{f2perp}.} \label{gperp}
\end{align}
Additionally, 
\begin{align}
& f_2^\perp \perp \partial_{y_1} Q \quad \text{thus} \quad \beta = \frac{1}{|| \partial_{y_1} Q ||_{L^2}^2} \langle L_- g^\perp , \partial_{y_1} Q \rangle = \frac{1}{|| \partial_{y_1} Q ||_{L^2}^2} \langle g^\perp , L_- \partial_{y_1} Q \rangle \nonumber \\
\text{thus} \quad & | \beta | \lesssim ||g^\perp ||_{L^2} \lesssim ||f_2||_{L^2} \quad \text{thanks to \eqref{gperp}.} \label{beta}
\end{align}
We now temporarily admit that there exists $R \in H_{oe}^2$ such that $L_- R = \partial_{y_1} Q$ (see Lemma \ref{lemR} below). Set $g = g^\perp + ( \alpha - \beta ) R$. Clearly, $g \in H_{oe}^2$ and
\[ L_- g = (f_2^\perp + \beta \partial_{y_1} Q ) + ( \alpha - \beta ) \partial_{y_1} Q = f_2^\perp + \alpha \partial_{y_1} Q = f_2. \]
Besides, combining \eqref{alpha}, \eqref{gperp} and \eqref{beta}, it holds that
\[ || g ||_{H^1} \lesssim ||g^\perp ||_{H^1} + | \alpha | + | \beta | \lesssim ||f_2||_{L^2} . \]
At last, we control the $H^2$-norm of $g$ as follows:
\[ \Delta g = g - Q^2 g - f_2 \quad \text{thus} \quad || \Delta g ||_{L^2} \lesssim ||g||_{L^2} + ||f_2||_{L^2} \lesssim ||f_2||_{L^2} . \]
Hence $||g||_{H^2} \lesssim ||f_2||_{L^2}$ which is the estimate announced in Lemma \ref{lemL}. \hfill \qedsymbol

\begin{lemma} \label{lemR}
There exists $R \in H_{oe}^2$ such that $L_- R = \partial_{y_1} Q$.
\end{lemma}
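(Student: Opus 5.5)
The plan is to exhibit $R$ explicitly, using the Galilean-type identity for $L_-$. I take
\[ R(y) = -\frac{1}{2} \, y_1 Q(y). \]
The proof then reduces to checking three things: the equation $L_- R = \partial_{y_1} Q$, that $R$ lies in $H^2$, and that $R$ has the right symmetries.

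\textbf{The equation.} By the Leibniz rule, $\Delta (y_1 Q) = y_1 \Delta Q + 2 \partial_{y_1} Q$. The multiplication by $(1-Q^2)$ commutes with multiplication by $y_1$, so
\[ L_- (y_1 Q) = y_1 \left ( - \Delta Q + Q - Q^3 \right ) - 2 \partial_{y_1} Q = y_1 L_- Q - 2 \partial_{y_1} Q = - 2 \partial_{y_1} Q. \]
Here I use $\Delta Q = Q - Q^3$, i.e.\ $L_- Q = 0$, which is recalled in the Notation paragraph. Multiplying by $-\frac{1}{2}$ gives $L_- R = \partial_{y_1} Q$.

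\textbf{Regularity.} $Q$ is smooth, and $Q$ together with all its derivatives decays exponentially. This follows from the standard properties of $Q$ in \cite{CGNT,We2}, or directly from the Agmon argument of Lemma \ref{lemAgmon} applied with $c=0$. Hence $y_1 Q$, $\partial_{y_j}(y_1 Q) = \delta_{1j} Q + y_1 \partial_{y_j} Q$ and all second derivatives are bounded by a polynomial times $e^{-\alpha |y|}$ for some $\alpha>0$. They are therefore square-integrable, and $R \in H^2(\R^2)$.

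\textbf{Symmetry.} Since $Q$ is radial, it is $y_1$-even and $y_2$-even. Multiplying by $y_1$ makes $R$ $y_1$-odd while keeping it $y_2$-even, so $R \in H_{oe}^2$.

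There is no real obstacle. The only point needing any care is justifying the exponential decay of the derivatives of $Q$ up to order two, and this is classical.
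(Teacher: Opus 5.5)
Your proof is correct, and it takes a different route from the paper.

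\textbf{The paper's route.} The paper argues abstractly. Since $\text{Ker}(L_-)=\text{span}(Q)$ and $\partial_{y_1}Q\perp Q$, it asserts that there is a unique $R\in H^1$ with $R\perp Q$ and $L_-R=\partial_{y_1}Q$. It upgrades $R$ to $H^2$ using the equation. It then gets the symmetries from uniqueness, by checking that $R(y_1,-y_2)$ and $-R(-y_1,y_2)$ solve the same problem.

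\textbf{Your route.} You write the solution down explicitly, using the infinitesimal Galilean identity $L_-(y_1Q)=-2\partial_{y_1}Q$. Since $\langle y_1Q,Q\rangle=0$ by oddness, your $R=-\frac12 y_1Q$ is exactly the paper's $R$. Your argument is more elementary. It does not need the solvability of $L_-R=f$ for $f\perp Q$; the paper uses this as a standard fact, and it really rests on $L_-$ being Fredholm, e.g.\ through coercivity on $Q^\perp$. It also bypasses the uniqueness argument for the symmetries, and it gives exponential decay of $R$ for free. The paper's argument, in exchange, applies to any right-hand side in $Q^\perp$ with the required symmetries, not only to $\partial_{y_1}Q$.

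\textbf{A minor point on regularity.} Do not cite Lemma \ref{lemAgmon} as stated. In the paper's logical order, Lemma \ref{lemR} is used to prove Lemma \ref{lemL}, which underlies the construction of the $U_c$ to which Lemma \ref{lemAgmon} applies, so that citation would be circular. Running the Agmon argument directly on $(-\Delta+1)Q=Q^3$ is fine, and so is citing the classical decay of $Q$.

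In fact you need less than decay of the derivatives. It is enough that $|y|Q\in L^2$, which follows from the bound $Q(y)\lesssim e^{-|y|}/|y|^{1/2}$ recalled in the Notation paragraph. Then $y_1Q\in L^2$, and $\Delta(y_1Q)=y_1(Q-Q^3)+2\partial_{y_1}Q\in L^2$ because $Q\in L^\infty\cap H^1$. Together these give $y_1Q\in H^2$ directly.
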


\begin{proof}
Recall that $\text{Ker} (L_-) = \text{span} (Q)$. Since $\partial_{y_1} Q \perp Q$, it results that there exists a unique $R \in H^1 ( \R^2 )$ such that $R \perp Q$ and $L_-R= \partial_{y_1} Q$. Since $\Delta R = R-Q^2R- \partial_{y_1}Q \in L^2$, it follows that $R \in H^2 ( \R^2 )$.
\begin{itemize}
	\item[$\bullet$] Set $\tilde{R} (y_1 \, , y_2) = R(y_1 \, , -y_2)$. Since $Q$ and $\partial_{y_1} Q$ are $y_2$-even, it holds that
	\begin{align*}
	& (L_- \tilde{R} ) (y_1 \, , y_2) = (L_- R)(y_1 \, , -y_2) = ( \partial_{y_1} Q)(y_1 \, , -y_2) = ( \partial_{y_1} Q) (y_1 \, , y_2) \\
	\text{and} \quad & \langle \tilde{R} \, , Q \rangle = \langle R \, , Q \rangle = 0.
	\end{align*}
	The uniqueness of $R$ then ensures that $\tilde{R}=R$; thus $R$ is $y_2$-even.
	\item[$\bullet$] Similarly, set $\check{R} (y_1 \, , y_2) = -R(-y_1 \, , y_2)$. Since $Q$ is $y_1$-even while $\partial_{y_1} Q$ is $y_1$-odd, it holds that
	\begin{align*}
	& (L_- \check{R} ) (y_1 \, , y_2) = -(L_- R)(-y_1 \, , y_2) = -( \partial_{y_1} Q)(-y_1 \, , y_2) = ( \partial_{y_1} Q) (y_1 \, , y_2) \\
	\text{and} \quad & \langle \check{R} \, , Q \rangle = - \langle R \, , Q \rangle = 0.
	\end{align*}
	The uniqueness of $R$ then ensures that $\check{R}=R$; thus $R$ is $y_1$-odd.
\end{itemize}
Hence $R \in H_{oe}^2$ as announced.
\end{proof}

\end{document}